\newcommand{\newsection}[1]{\setcounter{equation}{0} \section{#1}}
\newcommand{\bydef}{\stackrel{\rm def}{=}}
\newcommand{\cH}{{\mathcal H}}
\newcommand{\cK}{{\mathcal K}}
\newcommand{\cM}{{\mathcal M}}
\newcommand{\bD}{{\mathbb D}}
\newenvironment{psmallmatrix}
{\left(\begin{smallmatrix}}
	{\end{smallmatrix}\right)}
\newtheorem{thm}{Theorem}[section]
\newtheorem{lemma}[thm]{Lemma}
\newtheorem{proposition}[thm]{Proposition}
\theoremstyle{definition}
\newtheorem*{theorem}{Theorem}
\newtheorem{definition}[thm]{Definition}
\newtheorem{remark}[thm]{Remark}
\numberwithin{equation}{section}
\def\textmatrix#1&#2\\#3&#4\\{\bigl({#1 \atop #3}\ {#2 \atop #4}\bigr)}
\def\dispmatrix#1&#2\\#3&#4\\{\left({#1 \atop #3}\ {#2 \atop #4}\right)}
\author{Daniel Alpay, Tirthankar Bhattacharyya, Abhay Jindal and Poornendu Kumar}
\address{Department of Mathematics\\
	Chapman University\\
	Orange, CA 92866, USA.}
\email{alpay@chapman.edu}
\address{Department of Mathematics\\
	Indian Institute of Science\\
	Bangalore 560012, India}
\email{tirtha@iisc.ac.in; abjayj@iisc.ac.in; poornendukumar@gmail.com}
\begin{document}
	\thanks{{\em 2020 Mathematics Subject Classification.} Primary: 47A20, 30E10. Secondary: 93B28, 47A56.\\
		{\em Key words and phrases}: Approximation, State space method, Rational inner functions, Realization formula, J-contractive functions, Krein-Langer factorization, Potapov-Ginzburg transform.}
	\title{A dilation theoretic approach to approximation by inner functions}
	\maketitle
	\begin{abstract}
		Using results from theory of operators on a Hilbert space, we prove approximation results for matrix-valued holomorphic functions on the unit disc and the unit bidisc.
		The essential tools are the theory of unitary dilation of a contraction and the realization formula for functions in the unit ball of $H^\infty$.  We first prove a generalization of a result of Carath\'eodory. This generalization has many applications. A uniform approximation result for matrix-valued holomorphic functions which extend continuously to the unit circle is proved using the Potapov factorization. This generalizes a theorem due to Fisher.
		Approximation results are proved for matrix-valued functions for whom a naturally associated kernel has finitely many negative squares. This uses the Krein-Langer factorization. Approximation results for $J$-contractive meromorphic functions where $J$ induces an indefinite metric on $\mathbb C^N$ are proved using the Potapov-Ginzburg Theorem. Moreover, approximation results for holomorphic functions on the unit disc with values in certain other domains of interest are also proved.
		
	\end{abstract}
	
	\section{Introduction}
	
	
	Let $M_{N}(\mathbb{C})$ be the Banach algebra of all $N \times N$ complex matrices with the operator norm. For $\Omega = \mathbb D$ or $\Omega = \mathbb D^2$, a holomorphic function $F: \Omega \to M_{N}(\mathbb{C})$ is called $rational$ if every entry is a rational function with the poles off $\Omega$ and is called $inner$ if the boundary values of the function on the unit circle/torus are unitary matrices almost everywhere.
	
	Carath\'eodory, in his study of holomorphic functions from the open unit disc $\mathbb D = \{ z \in \mathbb C : |z| < 1 \}$ of the complex plane to the closed unit disc $\overline{\mathbb D}$, proved the following theorem, see Section 284 in \cite{C}. Later, Rudin generalized this to functions taking values in $\overline{\mathbb{D}}$ but defined on the polydisc $\mathbb D^n$, see \cite{Rudin}.
	\begin{theorem}[Carath\'eodory and Rudin] \label{CR}
		Let $\Omega$ denote the open unit disc $\mathbb D$ or the bidisc $\mathbb D^2$. Any holomorphic function $\varphi: \Omega \to \overline{\mathbb{D}}$ can be approximated (uniformly on compact subsets) by rational inner functions.
	\end{theorem}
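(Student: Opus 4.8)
The plan is to encode $\varphi$ in a unitary colligation and then extract rational inner approximants by finite-dimensional truncation. As a harmless first reduction, replacing $\varphi$ by $\varphi_r(z) = \varphi(rz)$ with $r \uparrow 1$ produces functions that are holomorphic across the boundary, satisfy $\sup_{\overline{\mathbb{D}}}|\varphi_r| \le 1$, and converge to $\varphi$ uniformly on compact subsets; so it is enough to approximate a single function in the closed unit ball of $H^\infty(\mathbb{D})$.

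Next I would invoke the realization formula: every such $\varphi$ can be written as $\varphi(z) = D + zC(I - zA)^{-1}B$, where the colligation $\begin{pmatrix} A & B \\ C & D \end{pmatrix}$ is \emph{unitary} on $\mathcal{H} \oplus \mathbb{C}$. The passage from a merely contractive colligation to a unitary one is exactly where the unitary dilation of a contraction enters. Two standard facts then frame the whole argument: a finite-dimensional unitary colligation has a transfer function that is rational and unimodular a.e.\ on the circle, hence a rational inner function (a finite Blaschke product); and conversely $\varphi$ is itself such a function precisely when $\mathcal{H}$ is finite-dimensional. Thus approximating $\varphi$ by rational inner functions amounts to approximating its unitary colligation by finite-dimensional unitary ones.

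The crux is the truncation. Picking finite-dimensional subspaces $\mathcal{H}_n \uparrow \mathcal{H}$ with orthogonal projections $P_n \to I$ strongly, the naive compression $\begin{pmatrix} P_n A P_n & P_n B \\ C P_n & D \end{pmatrix}$ is only a contraction, so its transfer function $\varphi_n$ is rational with $\|\varphi_n\|_\infty \le 1$ but \emph{not} inner. I would instead perform the truncation inside the canonical backward-shift model of the state operator $A$: truncating the shift to a finite cyclic block and restoring the boundary row and column from the defect data yields colligations $U_n$ that are genuinely unitary on $\mathcal{H}_n \oplus \mathbb{C}$. This is the mechanism by which, for example, the unitary cyclic shifts with transfer functions $z^n$ arise as the truncations approximating the zero function. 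Keeping \emph{unitarity} under truncation, rather than settling for contractivity, is where I expect the real work to lie, since the unitaries are not a neighbourhood of a general contraction and the state dimension must be allowed to grow.

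Finally, I would read off uniform convergence on compacta from $P_n \to I$ strongly together with the expansion $zC(I - zA)^{-1}B = \sum_{k \ge 0} z^{k+1} C A^k B$: the truncated terms converge termwise, while for $|z| \le \rho < 1$ the geometric bound $\|C\|\,\|B\|\,\rho^{k+1}$ dominates, so a dominated-convergence argument for series gives uniform convergence on $\{|z| \le \rho\}$. For $\Omega = \mathbb{D}^2$ the same scheme runs with And\^o's dilation of two commuting contractions replacing the single unitary dilation: by And\^o's theorem the closed unit ball of $H^\infty(\mathbb{D}^2)$ coincides with the Schur--Agler class, so $\varphi$ admits a two-variable unitary realization $\varphi(z_1,z_2) = D + C Z (I - A Z)^{-1} B$ with $Z = z_1 P \oplus z_2 (I - P)$; finite-dimensional unitary colligations of this block form are again rational inner on the bidisc, and the truncation-and-convergence argument carries over. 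The only additional obstacle there is that the unitary realization is supplied by And\^o's theorem rather than a scalar spectral argument, and that the truncation must respect the block grading of $Z$.
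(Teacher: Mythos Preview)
Your outline shares the paper's philosophy---encode $\varphi$ in a colligation and produce rational inner approximants from finite-dimensional \emph{unitary} colligations---but it diverges at the decisive step, and there it has a genuine gap. You start from an infinite-dimensional unitary realization and propose to truncate \emph{down} while preserving unitarity; you yourself flag this as ``where I expect the real work to lie,'' and indeed no construction is given. The phrase ``truncating the shift to a finite cyclic block and restoring the boundary row and column from the defect data'' is a gesture, not a proof: for a generic $\varphi$ the state operator in the unitary model is not a shift, a compression-plus-patch is not automatically unitary, and the $z^n \to 0$ example does not extend to a systematic scheme. Strong convergence $P_n \to I$ guarantees termwise convergence only for the \emph{contractive} compressions, which you correctly discard as not inner; once you replace them by genuinely unitary $U_n$, you owe a separate argument that the Taylor coefficients still converge, and none is supplied.

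The paper avoids this obstacle by reversing the direction of the dilation. It first approximates $\varphi$ by polynomials with sup-norm at most $1$ (Proposition~\ref{poly approx on polydisc}); these admit a \emph{finite-dimensional contractive} realization (Theorem~\ref{matrix valued 1}, and Theorem~\ref{matrix valued 2} for $\mathbb{D}^2$). It then dilates this finite contraction \emph{up} to a finite unitary via the explicit Egerv\'ary--Nevanlinna $m$-dilation. The payoff is exactness rather than asymptotics: in the paper's notation one has $B_m D_m^k C_m = B D^k C$ for all $m \ge k+2$, so only a tail estimate remains. The same device works on the bidisc because Knese's realization for rational Schur functions is already finite-dimensional; the infinite-dimensional Agler/And\^o realization you invoke is never needed. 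In short, the missing idea in your plan is precisely the two-step reduction \emph{finite contractive $\to$ finite unitary}, and the polynomial approximation that makes the first step available.
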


	Such a function $\varphi$ is said to be in the {\em Schur class}. There is a proof of this theorem through the fact that any solvable Pick-Nevanlinna interpolation problem has a rational inner solution.  This technique carries over to matrix-valued functions.
	
	For decades now, the theory of bounded operators on Hilbert spaces has been successfully used to give new proofs of complex analytic theorems. Two prominent examples are Sarason's approach to $H^\infty$ interpolation \cite{Sarason} and Agler's proof of Lempert's theorem \cite{Agler-inv}. See also \cite{AMY}.
	
    We shall give a new proof of the theorem above in a more general setting, viz., when the target set $\overline{\mathbb D}$ is replaced by certain compact sets of interest in higher dimension. This includes matrix-valued functions. We shall use the {\em state space method}, a term coined in \cite{Kaashoek}, motivated by the huge contribution of linear system theory to function theoretic operator theory by the transfer function realization formula for operator-valued holomorphic functions on appropriate domains in $\mathbb{C}$ or $\mathbb{C}^{n}.$ See \cite{Bart} and \cite{Matrix-Poly}. The second tool in our kitty is a dilation theorem due to O. Nevanlinna \cite{NE}, greatly popularized later by Levy and Shalit in \cite{Levy-Shalit}.

    Carath\'eodory's (and Rudin's) theorem is striking because the approximants map the unit disc {\em onto} itself whereas the approximated function is only required to map the unit disc {\em into} itself. In our proof using the ``state space method'', the idea is to start with the fact that any Schur class function has a realization
    $$\varphi(z) = A + zB(I - zD)^{-1}C$$
    with the associated {\em system matrix (colligation)} $\textmatrix     	A & B \\ C & D \\$ contractive and then produce approximants $\varphi_{m}$ in terms of unitary colligations $\textmatrix A & B_{m} \\ C_{m} & D_{m} \\ $. We can ensure that these unitary colligations act on finite dimensional spaces, thereby making $\varphi_{m}$ rational and inner. The convergence question is converted into showing a matrix convergence: $$ B_{m} D_{m}^{k} C_{m}\rightarrow B D^{k} C \quad  \text{ as } \quad  m\rightarrow \infty, \quad \text { for all } k \geq 1.$$
	
	Carath\'eodory's theorem for matrix-valued functions and an appealing characterization of matrix-valued rational inner functions on the unit disc by Potapov lead us to a generalization of Fisher's theorem. Using the Blaschke product description of scalar rational inner functions, Fisher proved the following well-known result in \cite{Fisher}.
	\begin{theorem}[Fisher]
		Let $f$ be analytic on $\mathbb D$, continuous on $\overline{\mathbb D}$, and bounded by one. Then $f$ may be uniformly approximated on $\overline{\mathbb D}$ by convex combinations of finite Blaschke products.
	\end{theorem}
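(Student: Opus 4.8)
The plan is to reduce first to a strictly contractive, boundary-analytic situation, and then to exhibit convex combinations of Blaschke products as compressions of \emph{normal} rational inner functions, which is what couples the problem to the machinery of the introduction.

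\textbf{Reduction.} I would begin by replacing $f$ with $f_{r,\rho}(z)=\rho\,f(rz)$ for $0<r,\rho<1$. Since $f$ is continuous on the compact set $\overline{\mathbb D}$ it is uniformly continuous there, so $f_{r,\rho}\to f$ uniformly on $\overline{\mathbb D}$ as $r,\rho\uparrow 1$; moreover each $f_{r,\rho}$ is holomorphic on $\{|z|<1/r\}\supset\overline{\mathbb D}$ and satisfies $\|f_{r,\rho}\|_{\overline{\mathbb D}}\le\rho<1$. As every convex combination of finite Blaschke products is holomorphic on $\mathbb D$ and continuous on $\overline{\mathbb D}$, the maximum modulus principle lets me measure the approximation error on $\mathbb T$ alone. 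It therefore suffices to prove that every $g\in A(\mathbb D)$ holomorphic past $\mathbb T$ with $\|g\|_{\overline{\mathbb D}}\le\rho<1$ is a uniform limit on $\mathbb T$ of convex combinations of finite Blaschke products; the theorem for $f$ follows by a double limit.

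\textbf{Structural bridge.} The feature that converts inner functions (unimodular on $\mathbb T$) into a strict contraction is averaging, and I would make this precise through the identity
\[
\big\langle\, U\,\operatorname{diag}(b_1,\dots,b_N)\,U^{*}\,\xi,\ \xi\,\big\rangle=\sum_{i=1}^{N}\big|\langle U^{*}\xi,e_i\rangle\big|^{2}\,b_i ,
\]
valid for any constant unitary $U\in M_N(\mathbb C)$, any unit vector $\xi$, and any finite Blaschke products $b_1,\dots,b_N$. The right-hand side is a genuine convex combination, since the weights $|\langle U^{*}\xi,e_i\rangle|^{2}$ are nonnegative and sum to $1$, while $\Theta:=U\,\operatorname{diag}(b_1,\dots,b_N)\,U^{*}$ is a rational inner function because $\operatorname{diag}(b_i)$ is unitary on $\mathbb T$ and $U$ is constant unitary. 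Thus the scalar convex combinations of finite Blaschke products are exactly the compressions $\langle\Theta\,\xi,\xi\rangle$ of \emph{normal} rational inner functions, and the task becomes the production of such a $\Theta$ and $\xi$ with $\langle\Theta(\cdot)\xi,\xi\rangle$ close to $g$ on $\mathbb T$.

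\textbf{Production.} To build $\Theta$ I would run the state-space scheme described in the introduction on a matrix-valued Schur function manufactured from $g$ (for the scalar statement the target $g\,I_N$ already suffices), using the matrix form of Theorem~\ref{CR}: realize the target by a contractive colligation, dilate it à la Nevanlinna to unitary colligations on finite-dimensional spaces, and obtain rational inner approximants $\Theta_m$ carrying the moment convergence $B_mD_m^{k}C_m\to BD^{k}C$ for every $k$. Potapov's factorization of each $\Theta_m$ into elementary Blaschke--Potapov factors is what lets me read the relevant compressions $\langle\Theta_m\,\xi,\xi\rangle$ as convex combinations $\sum_i\lambda_i^{(m)}b_i^{(m)}$ of scalar finite Blaschke products via the identity above. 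The moment convergence controls all Taylor coefficients, and since $g$ is holomorphic past $\mathbb T$ its coefficients decay geometrically, so coefficientwise convergence together with the uniform bound $\le 1$ upgrades to uniform convergence on $\overline{\mathbb D}$, hence on $\mathbb T$.

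\textbf{The main obstacle.} The delicate point is the compatibility demanded in the production step: I must arrange the dilation so that the approximants are \emph{normal}, i.e. simultaneously diagonalisable by a constant unitary, for only then is $\langle\Theta_m\,\xi,\xi\rangle$ literally a convex combination of scalar Blaschke products rather than an unstructured Schur function, and I must do so while preserving both the moment convergence and finite dimensionality. Reconciling this algebraic constraint (the role played by Potapov's structure theorem) with the analytic convergence, and simultaneously pushing compact convergence all the way to $\mathbb T$, is where the real work lies; the averaging encoded in the convex combination is exactly what absorbs the gap between $|b_i|=1$ on $\mathbb T$ and $|g|<1$.
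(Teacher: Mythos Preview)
The paper does not prove this theorem; it quotes Fisher's result and then proves the matrix-valued generalization (Theorem~\ref{GenFish}). That generalization is obtained by combining the Carath\'eodory-type approximation (Theorem~\ref{Mat-Den1}) with Lemma~\ref{phi_r}, and the latter \emph{uses} Fisher's original scalar argument that $b_\alpha(r\,\cdot)$ is a convex combination of finite Blaschke products. So the paper's route to the convex-hull statement is: approximate $F$ by a rational inner $\Phi$ on a smaller disc, rescale to $\Phi_r$, and invoke Fisher's lemma factor by factor in the Potapov product to see that $\Phi_r$ is already a convex combination.

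Your proposal attempts an independent derivation, and the plan has a genuine gap at the point you yourself flag. The ``structural bridge'' identity is correct, but it produces a convex combination of scalar Blaschke products only when $\Theta$ has the very special form $U\,\mathrm{diag}(b_1,\dots,b_N)\,U^{*}$ with a \emph{constant} unitary $U$. Neither the dilation scheme nor the Potapov factorization yields this. Potapov gives $\Theta=U\prod_m\big(b_{\alpha_m}P_m+(I-P_m)\big)$, and once two of the projections $P_m$ fail to commute the product is not diagonalizable by a constant unitary, so $\langle\Theta\xi,\xi\rangle$ is merely a Schur function, not a convex combination of Blaschke products. Targeting $gI_N$ does not help either: the block-diagonal realization dilates to a block-diagonal unitary, the approximant is $\Phi_m I_N$ for a scalar Blaschke product $\Phi_m$, and the compression is just $\Phi_m$---a single Blaschke product, which cannot approximate a strict contraction uniformly on $\mathbb T$. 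In short, there is no mechanism in the state-space construction that forces normality, and without it your bridge collapses.

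There is also a secondary issue in the convergence step. Coefficientwise convergence together with a uniform bound by~$1$ gives only locally uniform convergence in $\mathbb D$; it does not upgrade to uniform convergence on $\overline{\mathbb D}$. Geometric decay of the coefficients of $g$ says nothing about the decay of the coefficients of $\Theta_m-g$, since each $\Theta_m$ has modulus~$1$ on $\mathbb T$. The paper handles this by the rescaling $\Phi\mapsto\Phi_r$, which is exactly what converts compact convergence into uniform convergence on $\overline{\mathbb D}$ and simultaneously (via Fisher's lemma) furnishes the convex combination. That rescaling step, not a normality constraint on the dilation, is the missing idea.
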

	
	Potapov showed in \cite{PO} that any $N \times N$ matrix-valued rational inner function $\Phi$ is of the form
	\begin{align*}
		\Phi (z) = U \prod\limits_{m=1}^{M} \Big(b_{\alpha_m} (z) P_{m} + (I_{\mathbb{C}^{N}} - P_{m})\Big) \text{ for } z \in \mathbb D,
	\end{align*}
	where $M$ is a natural number, $U$ is an $N \times N$ unitary matrix, the $P_m$ are projections onto certain subspaces of $\mathbb{C}^{N}$, the $\alpha_m$ are points in the open unit disc  and
	$$b_{\alpha}(z):= \frac{z-\alpha}{1-\overline{\alpha} z} \text{ for } \alpha\in\mathbb{D}$$
	stands for a Blaschke factor. Such functions came to be known as Blaschke-Potapov products with a function of the form $b_{\alpha} P_{\cM} + (I_{\mathbb{C}^{N}} - P_{\cM})$ being called a Blaschke-Potapov factor because  $b_{\alpha}$ is a Blaschke factor.
	
	As one of the principal applications of Theorem \ref{Mat-Den}, we shall reap a {\em uniform} approximation result for matrix-valued holomorphic functions on $\mathbb D$ which are continuous on $\overline{\mathbb D}$ as well. This generalizes  Fisher's theorem. The crucial input which makes this possible is the Blaschke-Potapov formula. This is done in Section 3.
	
	Theorem \ref{Mat-Den} has further applications. The fact that a matrix-valued contractive holomorphic function $F$ satisfies $I \ge F(z)F(z)^*$ as well as, equivalently,
	$$K_F(z,w)=\frac{I - F(z)F(w)^*}{1 - z\overline{w}} \succeq 0,$$
	where $K \succeq 0$ for a kernel means that it is positive semi-definite, leads to generalizations in two different directions. Relaxing the positivity condition, we prove the following in Section 4.
	The proof of this uses the Krein-Langer Theorem.
	
	One way to study non self adjoint operators is through their characteristic functions. This inexorably leads to $J$-contractive functions, where $J\in\mathbb C^{N\times N}$ is a signature matrix, i.e., $J=J^{-1}=J^*$, see page 62 of \cite{MR48:904} for example. We have approximation results for $J$-contractive meromorphic functions as well as for functions whose kernel corresponding to $J$ (analogous to $K_F$ above, but now $J$ replacing the identity operator) has finitely many negative squares. The terminologies are explained in the relevant section.
	
	We also have two results about functions taking values into the symmetrized bidisc $\Gamma$ or into the tetrablock $\overline{\mathbb{E}}$. The sets $\Gamma$ and $\overline{\mathbb E}$ as well as the $\Gamma$-inner functions and the $\overline{\mathbb{E}}$-inner functions will be described in the context in the final section when we prove the results.
	
	We thank the referees for valuable comments which have greatly improved the paper.
	\section{Approximation by dilation} \label{appbydil}
	
	We start with a proposition which is a slight improvement of Lemma 6.2 of \cite{GN}. It has the same proof and the proof also carries verbatim in the case the function $F$ takes its values in rectangular matrices instead of square ones.
	
	\begin{proposition} \label{poly approx on polydisc}
		Any holomorphic function $F: \mathbb{D}^{n} \to M_{N}(\mathbb{C})$ with $\|F(z)\| < 1$ for all $z\in\mathbb{D}^{n},$ can be approximated (uniformly on compact subsets) by matrix-valued polynomials ${P_m}$ with $\|P_m\|_{\infty, \overline{\mathbb{D}^{n}}} < 1,$ for all $m\geq{1}$.
	\end{proposition}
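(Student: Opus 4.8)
The plan is to combine two standard devices: dilation toward the boundary, which turns $F$ into functions holomorphic on a neighborhood of $\overline{\mathbb{D}^n}$ while preserving strict contractivity, and truncation of the Taylor series, which produces polynomials. I do not expect a serious obstacle here; the argument is the matrix-valued several-variable analogue of the classical one-variable scheme, and the only point needing care is keeping the strict inequality $\|P_m\|_{\infty,\overline{\mathbb{D}^n}} < 1$ intact.

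First I would introduce the dilates $F_r(z) = F(rz)$ for $0 < r < 1$. Each $F_r$ is holomorphic on the enlarged polydisc $\frac{1}{r}\mathbb{D}^n$, hence continuous on $\overline{\mathbb{D}^n}$. The crucial observation is that
$$\|F_r\|_{\infty,\overline{\mathbb{D}^n}} = \sup_{w \in r\overline{\mathbb{D}^n}} \|F(w)\|,$$
and since $r\overline{\mathbb{D}^n}$ is a compact subset of $\mathbb{D}^n$ on which the continuous function $z \mapsto \|F(z)\|$ attains a maximum that is strictly less than $1$, we get $\|F_r\|_{\infty,\overline{\mathbb{D}^n}} < 1$. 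This is precisely where the hypothesis that $F$ is strictly contractive at every point of $\mathbb{D}^n$ is used. Moreover, $F_r \to F$ uniformly on compact subsets of $\mathbb{D}^n$ as $r \uparrow 1$.

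Next, for a fixed $r$, I would approximate $F_r$ by partial sums of its Taylor expansion about the origin. Because $F_r$ is holomorphic on a polydisc strictly larger than $\overline{\mathbb{D}^n}$, the monomial series converges absolutely and uniformly on the compact set $\overline{\mathbb{D}^n}$; truncating at total degree $k$ yields a matrix-valued polynomial $Q_{r,k}$ with $\|Q_{r,k} - F_r\|_{\infty,\overline{\mathbb{D}^n}} \to 0$ as $k \to \infty$. Since $\|F_r\|_{\infty,\overline{\mathbb{D}^n}} < 1$, the triangle inequality forces $\|Q_{r,k}\|_{\infty,\overline{\mathbb{D}^n}} < 1$ for all sufficiently large $k$.

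Finally I would diagonalize. Choosing a sequence $r_m \uparrow 1$ and, for each $m$, an index $k_m$ large enough that both $\|Q_{r_m,k_m}\|_{\infty,\overline{\mathbb{D}^n}} < 1$ and $\|Q_{r_m,k_m} - F_{r_m}\|_{\infty,\overline{\mathbb{D}^n}} < 1/m$, I set $P_m = Q_{r_m,k_m}$. On any compact $K \subset \mathbb{D}^n$ the estimate
$$\|P_m - F\|_K \le \|P_m - F_{r_m}\|_{\infty,\overline{\mathbb{D}^n}} + \|F_{r_m} - F\|_K$$
shows $P_m \to F$ uniformly on $K$, while by construction each $P_m$ is a polynomial with $\|P_m\|_{\infty,\overline{\mathbb{D}^n}} < 1$, as required.
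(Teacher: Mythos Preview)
Your argument is correct: the dilation $F_r(z)=F(rz)$ followed by truncation of the Taylor series on $\overline{\mathbb D^n}$, and then a diagonalization, does exactly what is needed, and you have handled the one delicate point---preserving the strict inequality $\|P_m\|_{\infty,\overline{\mathbb D^n}}<1$---correctly by exploiting that $r\overline{\mathbb D^n}$ is compact in $\mathbb D^n$.

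As for comparison, the paper does not actually supply its own proof of this proposition: it merely observes that the statement is a slight improvement of Lemma~6.2 of \cite{GN} with the same proof. Your write-up is a self-contained version of the standard argument (and presumably close to what \cite{GN} does), so there is no substantive divergence to discuss.
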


	
	We now quote a useful tool.

	\begin{thm}[Realization formula for the disc]\label{matrix valued 1}
		Let $F:\mathbb{D} \to M_{N}(\mathbb{C})$ be a rational function such that $\|F\|_{\infty} \leq 1$. Then there exist a positive integer $d$ and a contractive matrix
		$$\begin{bmatrix}
			A & B \\ C & D
		\end{bmatrix} : \mathbb{C}^{N} \oplus \mathbb{C}^{d} \to \mathbb{C}^{N} \oplus \mathbb{C}^{d} $$ such that
		$$ F(z) = A + zB (I - z D)^{-1}C.$$
	\end{thm}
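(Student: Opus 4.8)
The plan is to realize $F$ through the positive kernel it induces together with a \emph{lurking isometry} argument. Since $\|F\|_{\infty}\le 1$ on $\mathbb D$ and $F$ is holomorphic there, the kernel
$$K_F(z,w) = \frac{I - F(z)F(w)^*}{1 - z\overline w}$$
is positive semi-definite on $\mathbb D\times\mathbb D$. Because $F$ is \emph{rational}, the functions $w\mapsto K_F(\cdot,w)\xi$ span only a finite-dimensional space, so $K_F$ has finite rank; this is exactly where the positive integer $d$ will come from. By the Kolmogorov decomposition I would therefore factor
$$K_F(z,w) = G(z)G(w)^*, \qquad G(z)\colon \cH \to \mathbb C^N, \quad \dim\cH = d < \infty,$$
with the ranges of the $G(w)^*$ spanning $\cH$.

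Next I would rewrite the defining identity $I - F(z)F(w)^* = (1-z\overline w)\,G(z)G(w)^*$ in the Gram-matching form
$$F(z)F(w)^* + G(z)G(w)^* = I + z\overline w\, G(z)G(w)^*.$$
Reading the two sides as Gram matrices of the $\mathbb C^N\oplus\cH$-valued quantities $\sbm{F(w)^* \\ G(w)^*}\xi$ and $\sbm{I \\ \overline w\, G(w)^*}\xi$, this identity says precisely that the assignment
$$V:\ \sbm{I \\ \overline w\, G(w)^*}\xi \ \longmapsto\ \sbm{F(w)^* \\ G(w)^*}\xi$$
is a well-defined isometry between two subspaces of $\mathbb C^N\oplus\cH$. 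The verification that $V$ is well-defined and isometric is the computational heart of the matter, and it is a direct consequence of the displayed identity.

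Since $\mathbb C^N\oplus\cH$ is finite-dimensional and $V$ is an isometry between two of its subspaces (necessarily of equal dimension), $V$ extends to a unitary $U$ on all of $\mathbb C^N\oplus\cH\cong\mathbb C^N\oplus\mathbb C^d$. Writing $U = \sbm{A & B \\ C & D}$ on $\mathbb C^N\oplus\mathbb C^d$, the relation $U\,\sbm{I \\ \overline w\, G(w)^*} = \sbm{F(w)^* \\ G(w)^*}$ splits into $A + \overline w\, B\, G(w)^* = F(w)^*$ and $C + \overline w\, D\, G(w)^* = G(w)^*$. Solving the second equation for $G(w)^* = (I - \overline w D)^{-1}C$ and substituting into the first gives $F(w)^* = A + \overline w\, B(I - \overline w D)^{-1}C$; taking adjoints yields $F(z) = A^* + z\,C^*(I - zD^*)^{-1}B^*$, which is the asserted realization, now with the colligation $U^*$, still unitary and hence contractive.

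The step I expect to be the main obstacle is the finite-rank claim: one must argue carefully that rationality of $F$ forces $K_F$ to have finite rank, so that the state space $\cH$ is genuinely $\mathbb C^d$ and $V$ extends to an honest unitary on a finite-dimensional space. Note that if one only assumed $\|F\|_{\infty}\le 1$ without rationality, the very same construction still produces a contractive colligation, but on a possibly infinite-dimensional $\cH$; it is precisely rationality that collapses the state space to finite dimension. The remaining ingredients—positivity of $K_F$, the Gram computation showing $V$ is isometric, and the geometric-series inversion identifying the transfer function—are then routine.
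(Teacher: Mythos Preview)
The paper does not supply its own proof of this theorem---it cites it as the disc version of the Kalman--Yakubovich--Popov lemma, referring to \cite{DDGK}, \cite{GN} and \cite{Ball}---so there is no in-paper argument to compare against. Your lurking-isometry construction is the standard route to a unitary realization of a Schur-class function, and every step from the Gram identity onward is correct. The genuine gap is exactly where you flagged it, but it is worse than merely delicate: the assertion that rationality of $F$ forces $K_F(z,w)=\frac{I-F(z)F(w)^*}{1-z\overline w}$ to have finite rank is \emph{false}. Take $N=1$ and $F\equiv c$ with $0<|c|<1$; then $K_F(z,w)=\frac{1-|c|^2}{1-z\overline w}$ is a nonzero multiple of the Szeg\H{o} kernel, whose reproducing kernel Hilbert space is all of $H^2$. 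Thus your Kolmogorov factor $G$ lands in an infinite-dimensional $\cH$, the isometry $V$ extends only to a unitary on that infinite-dimensional space, and you recover the classical infinite-dimensional unitary realization rather than the finite contractive one asserted here. (The de~Branges--Rovnyak space $\cH(K_F)$ is finite-dimensional essentially only when $F$ is rational \emph{inner}.)

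The cited proofs obtain finiteness by a different mechanism. One route first uses the purely algebraic fact that every rational matrix function analytic at the origin has some finite state-space realization $F(z)=A+zB(I-zD)^{-1}C$ (with $d$ the McMillan degree, no contractivity yet), and then feeds the hypothesis $\|F\|_\infty\le 1$ into a KYP / bounded-real-lemma step that produces a positive definite $X$ solving a linear matrix inequality; the state-space similarity by $X^{1/2}$ renders the colligation contractive. The approach of \cite{GN} instead clears the denominator of $F$ and appeals to a matrix Fej\'er--Riesz type factorization of the resulting nonnegative trigonometric polynomial, which directly yields a \emph{polynomial} (hence finite-rank) $G$ to plug into the lurking isometry. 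In either case the finite dimension comes from an additional algebraic input, not from rationality of $F$ alone.
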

	
	This finite-dimensional realization formula is the disc version of the celebrated Kalman-Yakubovich-Popov lemma; see \cite{DDGK} for an indefinite
        version of it and \cite{GN}, Proposition 4.2 for a recent proof. These two proofs give different points of view. See also \cite{Ball}.

	
	The next result is the most crucial step towards proving the main theorem.

	\begin{thm}\label{rational approx}
		Any rational function $F: \mathbb{D} \to M_{N}(\mathbb{C})$ with $\|F(z)\| \leq 1$ for all $z \in \mathbb D$ can be approximated (uniformly on compact subsets) by $M_{N}(\mathbb{C})$-valued rational inner functions.
	\end{thm}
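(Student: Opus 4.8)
The plan is to run the state space strategy outlined in the introduction. First I would invoke the realization formula (Theorem \ref{matrix valued 1}) to write $F(z) = A + zB(I - zD)^{-1}C$ for a contractive colligation $M = \begin{pmatrix} A & B \\ C & D\end{pmatrix}$ on $H := \mathbb{C}^N \oplus \mathbb{C}^d$, with external space $\mathcal{E} = \mathbb{C}^N$ and state space $\mathcal{X} = \mathbb{C}^d$. Expanding the resolvent gives $F(z) = A + \sum_{k\ge1} z^k B D^{k-1}C$, so the data to be matched are the constant term $A$ and the moments $B D^{k-1}C$. Next I would use two standard structural facts: if a colligation of the same shape is unitary, its transfer function is inner, and if in addition its state space is finite dimensional, the transfer function is rational (its poles sit at the reciprocals of the eigenvalues of a contraction, hence off $\mathbb{D}$). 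Thus it suffices to produce finite dimensional unitary colligations $M_m = \begin{pmatrix} A & B_m \\ C_m & D_m\end{pmatrix}$ whose transfer functions $\varphi_m$ converge to $F$ uniformly on compact subsets.

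Since every $\varphi_m$ and $F$ are bounded by one on $\mathbb{D}$, the family is normal, so uniform convergence on compacta reduces to convergence of Taylor coefficients, i.e. to $B_m D_m^{k-1}C_m \to BD^{k-1}C$ for each $k\ge1$ (the constant terms being already equal to $A$). To organize this I would record the generating-function identity $\sum_{k\ge0} z^k\,P_{\mathcal E}M^k\iota_{\mathcal E} = (I - zF(z))^{-1}$, obtained by a Schur-complement computation of the $\mathcal E\mathcal E$-corner of $(I - zM)^{-1}$, using $I - zF(z) = I - zA - z^2 B(I-zD)^{-1}C$. This shows that the compressed powers $P_{\mathcal E}M^k\iota_{\mathcal E}$ determine $F$ through a triangular relation among the Taylor coefficients, and conversely that matching these corners up to order $m$ forces the Taylor coefficients of $\varphi_m$ and $F$ to agree up to order $m-1$.

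The crucial step is therefore the dilation. I would apply the finite dimensional unitary dilation theorem of O. Nevanlinna, popularized by Levy and Shalit (\cite{NE}, \cite{Levy-Shalit}), to the contraction $M$: for each $m$ there is a finite dimensional Hilbert space $K_m \supseteq H$ and a unitary $U_m$ on $K_m$ with $P_H U_m^k|_H = M^k$ for $0\le k\le m$. Reading $U_m$ as a colligation with the unchanged external space $\mathcal E = \mathbb{C}^N$ and enlarged state space $K_m\ominus\mathcal E$, its feedthrough corner is $P_{\mathcal E}U_m\iota_{\mathcal E} = P_{\mathcal E}M\iota_{\mathcal E} = A$, so $\varphi_m(0) = A = F(0)$; and since $\mathcal E\subseteq H$, the dilation identity gives $P_{\mathcal E}U_m^k\iota_{\mathcal E} = P_{\mathcal E}M^k\iota_{\mathcal E}$ for $k\le m$. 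By the generating-function identity applied to both colligations, $\varphi_m$ and $F$ then share all Taylor coefficients up to order $m-1$, and letting $m\to\infty$ yields the required convergence.

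The main obstacle I anticipate is reconciling the two competing demands on the dilation: it must be unitary (to make $\varphi_m$ inner) yet act on a finite dimensional space (to make $\varphi_m$ rational), while still reproducing enough of the moment data of $M$. This is exactly what the finite dimensional Egerv\'ary--Nevanlinna dilation delivers, and the bookkeeping that converts exact matching of compressed powers into matching of the transfer functions' Taylor coefficients -- via the corner generating function above -- is the technical heart of the argument. A secondary point to verify is that the compression $D_m$ of the unitary $U_m$ is a contraction, so that $\det(I - zD_m)$ has no zeros in $\mathbb{D}$ and $\varphi_m$ is genuinely holomorphic and rational on the disc.
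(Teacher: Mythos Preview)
Your proposal is correct and follows the same overall strategy as the paper: realize $F$ via a contractive colligation, take a finite unitary dilation of Egerv\'ary--Nevanlinna type, and use the transfer functions of these unitary colligations as the rational inner approximants. The only substantive difference is in how the moment matching is verified. The paper writes down the explicit Egerv\'ary dilation matrix $U_m$ on $(m+1)$ copies of $\mathcal H$ and checks by direct block-matrix multiplication that $B_m D_m^k C_m = BD^kC$ once $m\ge k+2$, then estimates the tail of the power series by hand. You instead use only the abstract dilation identity $P_H U_m^k|_H = M^k$ for $k\le m$, combined with the Schur-complement generating function $\sum_{k\ge0} z^k P_{\mathcal E}M^k\iota_{\mathcal E} = (I-zF(z))^{-1}$, to deduce that $\varphi_m$ and $F$ share all Taylor coefficients through order $m-1$, and then appeal to normality. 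Your bookkeeping is a bit cleaner and works for \emph{any} finite unitary $m$-dilation, not just the explicit one written down in the paper; the paper's hands-on computation, on the other hand, makes the exact equality of the moments (as opposed to mere convergence) visible without the intermediate inversion step.
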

	\begin{proof}
		The sequence of $M_{N}(\mathbb{C})$-valued rational inner functions which approximates $F$ will actually be constructed by mixing two ingredients.
		First we invoke the Realization Formula, viz., Theorem \ref{matrix valued 1} and set some notations. Let $T$ denote the contraction
		$$ \begin{bmatrix}
			A & B \\ C & D
		\end{bmatrix} : \mathbb{C}^{N} \oplus \mathbb{C}^{d} \to \mathbb{C}^{N} \oplus \mathbb{C}^{d}$$
		with $D_{T^{*}}$ and $D_{T}$ being the defect operators $(I- T T^{*})^{1/2}$ and $(I - T^{*} T)^{1/2}$ respectively. Let
		$$ D_{T^{*}} = \begin{bmatrix}
			S_{1} & S_{2} \\ S_{3} & S_{4}
		\end{bmatrix} \text{ and } D_{T} = \begin{bmatrix}
			T_{1} & T_{2} \\ T_{3} & T_{4}
		\end{bmatrix}$$
		as operators from $\mathbb{C}^{N} \oplus \mathbb{C}^{d}$ into itself. Let $\mathcal{H} := \mathbb{C}^{N} \oplus \mathbb{C}^{d}.$ The second ingredient is a finite dilation of the contraction $T,$ i.e., for any $m\geq1,$ a space $\cH_{m}$ consisting of the direct sum of $(m+1)$ copies of $\cH$ and a unitary $U_{m}$ on it such that $T^{j} = P_{\cH} U_{m}^{j}|_{\cH}$ for $j=1,\dots,m.$ This idea originated with \cite{NE}, see also \cite{Levy-Shalit}. A sequence of functions $F_{m}$ induced by the unitaries $U_{m}$ will be the approximating sequence.

		To that end, consider the space
		$$\cK_m \bydef \mathbb{C}^{d} \oplus \mathcal{H}\oplus \dots \oplus \mathcal{H}\oplus \mathbb{C}^{N} \oplus \mathbb{C}^{d},$$
		where $\mathcal{H} $ occurs $(m-1)$ times. Now, consider the block operator matrix
		$$ U_{m} :=  \begin{bmatrix}
			A & B & 0 &\dots & 0 & S_{1} & S_{2} \\
			C & D & 0 &\dots & 0 & S_{3} & S_{4} \\
			T_{1} & T_{2} & 0 & \dots & 0 & -A^{*} & -C^{*}\\
			T_{3} & T_{4} & 0 & \dots & 0 & -B^{*} & -D^{*}\\
			0 & 0 & I_{\mathcal{H}} & \dots & 0 & 0 & 0 \\
			\vdots & \vdots & \vdots & \ddots & \vdots & \vdots & \vdots \\
			0 & 0 & 0 & \dots & I_{\mathcal{H}} & 0 & 0
		\end{bmatrix} $$
		acting on the space $ \mathbb{C}^{N} \oplus \cK_m$. A straightforward calculation will show that $U_{m} $ is a unitary matrix. This $U_m$ is our $ \begin{bmatrix}
			A & B_{m} \\ C_{m} & D_{m}
		\end{bmatrix}$ alluded to in the introduction. We note that
		$$B_{m}= \begin{bmatrix} B & 0 &  \dots & 0 & S_1 & S_2
		\end{bmatrix}, \;\; C_{m} = \begin{bmatrix}
			C & T_{1} & T_{3} & 0 & \dots & 0
		\end{bmatrix}^{t},$$ and
		$$ D_{m} = \begin{bmatrix}
			D & 0 &\dots & 0 & S_{3} & S_{4} \\
			T_{2} & 0 & \dots & 0 & -A^{*} & -C^{*}\\
			T_{4} & 0 & \dots & 0 & -B^{*} & -D^{*}\\
			0 & I_{\mathcal{H}} & \dots & 0 & 0 & 0 \\
			\vdots & \vdots & \ddots & \vdots & \vdots & \vdots \\
			0 & 0 & \dots & I_{\mathcal{H}} & 0 & 0
		\end{bmatrix}.$$
		For fixed $k\geq 1$, we shall show that
\begin{equation} \label{MatConv} B_{m} D_{m}^{k}C_{m} = B D^{k} C \end{equation} for all $m \geq k+2$. First note that for $m \geq 3$, the matrix $D_{m} C_{m}  : \mathbb{C} ^{N} \to \mathbb{C}^{d} \oplus \mathbb{C}^{N} \oplus \mathbb{C}^{d} \oplus \mathbb{C}^{N} \oplus \mathbb{C}^{d} \oplus\dots \oplus \mathbb {C}^N\oplus\mathbb{C}^d$  is given by
		$$ \begin{bmatrix}
			DC & T_{2}C & T_{4} C & T_{1} & T_{3} & 0 & \dots & 0
		\end{bmatrix}^{t}.$$
		Also, a simple calculation gives the following
		$$ D_m^kC_m=\begin{bmatrix}
			DC & * & * & * & \dots & *& 0  &0
		\end{bmatrix}^{t},  \quad \text{for  }  m\geq k+2,$$
		where the asterisk symbols mean that certain matrices are there which do not enter later computation. A matrix multiplication then yields \eqref{MatConv}.

	To summarize, we have proved that there is a sequence of finite-dimensional Hilbert spaces $\mathcal H_m$, viz., the direct sum of $m+1$ copies of $\mathcal H$ and a sequence of unitary matrices $U_m$ on them satisfying a convergence property as follows.
		$$ \mathcal H_m = \mathbb{C}^{N} \oplus \cK_{m} \text{ and } \begin{bmatrix}
			A & B_{m} \\ C_{m} & D_{m}
		\end{bmatrix} : \mathbb{C}^{N} \oplus \cK_m \to \mathbb{C}^{N} \oplus \cK_m $$
		and $B_{m} D_{m}^{k} C_{m} \to B D^{k} C$ (in norm) for all $k\geq 1$. We are ready to define the approximants.
		
		Consider the matrix-valued functions $F_m$ defined as
		$$F_m(z)= A+zB_m(I-zD_m)^{-1}C_m.$$
		The functions $F_m$ are rational inner because $ \begin{bmatrix}
			A & B_{m} \\ C_{m} & D_{m}
		\end{bmatrix}$ are unitary matrices. Fix a compact set $S\subset\mathbb{D}$. For given $\epsilon>0$, there exists $M_{0}\in\mathbb{N}$ such that
		$$|z|^l<\epsilon \quad \quad\text{for all}\quad l\geq M_{0} \text{ and }z\in S.$$
		Now,
		\begin{align*}
			&	\|F(z)-F_m(z)\| \\
			= & \|z\sum_{k\geq 0}^\infty(B_mD_m^kC_m-BD^kCz^k)\|\\
			\leq & |z| \sum_{k\geq 0}^\infty\|B_mD_m^kC_m-BD^kC\| |z|^k \\
			= & |z| \sum_{k\geq 0}^{M_{0} -1}\|B_mD_m^kC_m-BD^kC\| |z|^k + |z| \sum_{k\geq M_{0}}^\infty\|B_mD_m^kC_m-BD^kC\| |z|^k \\
			\leq & |z| \sum_{k\geq 0}^{M_{0} -1}\|B_mD_m^kC_m-BD^kC\| |z|^k +  \epsilon \frac{2 |z|}{1- |z|}\\
			= & \epsilon \frac{2 |z|}{1- |z|} \hspace{5mm} (\text{ for all } m \geq M_{0} +1).
		\end{align*}
		Therefore, the sequence of rational inner functions $F_{m}$ converges uniformly on compact subsets of $\mathbb{D}.$	
	\end{proof}

	\begin{thm}\label{Mat-Den1}
		Any holomorphic function $F: \mathbb{D} \to M_{N}(\mathbb{C})$ with $\|F(z)\| \leq 1$ for all $z\in\mathbb{D}$ can be approximated (uniformly on compact subsets) by $M_{N}(\mathbb{C})$-valued rational inner functions.
	\end{thm}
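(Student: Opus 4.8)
The plan is to reduce the statement to Theorem \ref{rational approx} by first approximating the general holomorphic $F$ by rational functions that are bounded by one, and then feeding those rationals into the already-established inner approximation. The only genuine subtlety is that the polynomial approximation at our disposal, Proposition \ref{poly approx on polydisc}, demands the \emph{strict} bound $\|F(z)\| < 1$, whereas our hypothesis only supplies $\|F(z)\| \le 1$. I would bridge this gap with the standard scaling maneuver $F \mapsto rF$.

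Concretely, fix a compact set $S \subset \mathbb{D}$ and $\epsilon > 0$. For $r \in (0,1)$ the scaled function $rF$ satisfies $\|(rF)(z)\| \le r < 1$ for every $z \in \mathbb{D}$, so it meets the strict hypothesis of Proposition \ref{poly approx on polydisc}. Moreover, since $\|F(z)\| \le 1$ holds on all of $\mathbb{D}$, we have the uniform estimate $\|(rF)(z) - F(z)\| = (1-r)\,\|F(z)\| \le 1-r$, so I would choose $r$ close enough to $1$ to make $\|rF - F\| < \epsilon/3$ on $S$. Applying Proposition \ref{poly approx on polydisc} (with $n = 1$) to $rF$ then yields a matrix-valued polynomial $P$ with $\|P\|_{\infty,\overline{\mathbb{D}}} < 1$ and $\|P - rF\| < \epsilon/3$ on $S$.

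Such a $P$ is rational with no poles in $\overline{\mathbb{D}}$ and obeys $\|P(z)\| \le 1$ for all $z \in \mathbb{D}$, so Theorem \ref{rational approx} applies verbatim and produces a rational inner function $\Phi$ with $\|\Phi - P\| < \epsilon/3$ on $S$. A three-term triangle inequality, $\|F - \Phi\| \le \|F - rF\| + \|rF - P\| + \|P - \Phi\| < \epsilon$ on $S$, then finishes the argument, and since $S$ and $\epsilon$ were arbitrary this gives uniform approximation on compact subsets.

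The only real obstacle is precisely the passage from the non-strict bound $\|F\| \le 1$ to the strict bound required by the polynomial approximation step; the contraction $F \mapsto rF$ resolves it cleanly exactly because $F$ is bounded by one on the entire disc, which makes $rF \to F$ uniformly rather than only on compacta. Everything else is a straightforward concatenation of the two approximation theorems already in hand.
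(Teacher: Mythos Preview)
Your argument is correct, and it is genuinely different from the paper's. The paper handles the passage from $\|F\|\le 1$ to a strict bound not by scaling but by invoking a matrix maximum-modulus principle (Theorem~2 of \cite{Condori}): either $\|F(z)\|<1$ for all $z\in\mathbb D$, in which case Proposition~\ref{poly approx on polydisc} and Theorem~\ref{rational approx} apply directly, or $\|F(z)\|\equiv 1$, in which case a structure theorem (Theorem~4 of \cite{Condori}) factors $F$ as $U\begin{bsmallmatrix}1&0\\0&G\end{bsmallmatrix}V$ with $G$ taking values in $M_{N-1}(\mathbb C)$, and one inducts on $N$. Your scaling $F\mapsto rF$ sidesteps this dichotomy entirely and is more elementary: it avoids the external results from \cite{Condori} and the induction on the matrix size, at the cost of giving no structural information in the extremal case $\|F\|\equiv 1$. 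For the bare approximation statement your route is cleaner; the paper's route, on the other hand, fits the spirit of exploiting operator-theoretic structure and would be needed if one wanted the approximants to inherit that block form.
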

	\begin{proof}
		By maximum norm principle, [Theorem 2, \cite{Condori}], either $\|F(z)\| <1$ for all $z\in\mathbb{D},$ or $\|F(z)\| \equiv 1.$
		
		\textbf{Case-1:} $\|F(z)\| <1$ for all $z\in\mathbb{D}.$\\
		In this case, Proposition \ref{poly approx on polydisc} and Theorem \ref{rational approx} together will give us an approximation of $F$ by matrix-valued rational inner functions.
		
		\textbf{Case-2:} $\|F(z)\| \equiv 1.$ \\
		By Theorem 4 of \cite{Condori}, there are $N \times N$ constant unitary matrices $U$ and $V,$ and an analytic function $G: \mathbb{D} \to M_{N-1}$ with $\|G(z)\| \leq 1$ for all $z\in\mathbb{D},$ such that
		\begin{align} \label{structure}
			F (z) = U \begin{bmatrix}
				1 & 0 \\ 0 & G(z)
			\end{bmatrix} V.
		\end{align}
		So, if $N=2,$ then Caratheodory's Theorem together with the equation \eqref{structure} will give us an approximation of $F$ by matrix-valued rational inner functions. Inductively, we can prove the result for $N>2.$
		\end{proof}

The approximation theorem above continues to hold for matrix-valued functions on the bidisc. We shall outline the proof below. The finite dimensional realization formula we need has recently been proven by Knese in \cite{GN}.

	\begin{thm}[Realization formula for the bidisc] \label{matrix valued 2}
		Let $F:\mathbb{D}^2 \to M_{N}(\mathbb{C})$ be a rational function such that $\|F\|_{\infty} \leq 1$. Then there exist positive integers $d_1 ,d_2$ and a contractive matrix
		$$\begin{bmatrix}
			A & B \\ C & D
		\end{bmatrix} : \mathbb{C}^{N} \oplus \mathbb{C}^{d} \to \mathbb{C}^{N} \oplus \mathbb{C}^{d} \hspace{5mm} \text{ with } d = d_1+d_2$$
		such that with the notation $Z= z_1I_{d_1} \oplus  z_2I_{d_2}$, we have
		$$ F(z_1, z_2) = A + BZ (I -  DZ)^{-1}C.$$
		
	\end{thm}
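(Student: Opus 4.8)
The plan is to reduce to the classical \emph{lurking isometry} construction, which for the bidisc hinges on the fact that, when $n=2$, the Schur class and the Schur--Agler class coincide (a phenomenon special to one and two variables). First I would record the operator-valued Agler decomposition: since $F$ is holomorphic on $\mathbb D^2$ with $\|F\|_\infty \le 1$, there exist positive semidefinite $M_N(\mathbb C)$-valued kernels $K_1, K_2$ on $\mathbb D^2$ such that
\[
I - F(z) F(w)^* = (1 - z_1 \overline{w_1}) K_1(z,w) + (1 - z_2 \overline{w_2}) K_2(z,w).
\]
For matrix- (or operator-) valued $F$ this is the Ball--Trent form of Agler's theorem; it is exactly here that two variables are essential, as the analogous decomposition fails on $\mathbb D^n$ for $n\ge 3$.

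Next I would factor each kernel as $K_j(z,w) = E_j(z) E_j(w)^*$ with $E_j(z)\colon \mathcal M_j \to \mathbb C^N$, set $\mathcal M = \mathcal M_1 \oplus \mathcal M_2$, $E(z) = [\,E_1(z)\ E_2(z)\,]$, and $Z_z = z_1 I_{\mathcal M_1} \oplus z_2 I_{\mathcal M_2}$, so that the decomposition rearranges to $I + E(z) Z_z Z_w^* E(w)^* = F(z)F(w)^* + E(z)E(w)^*$. Reading this as an equality of Gram matrices, namely
\[
\langle c', c\rangle + \langle Z_w^* E(w)^* c', Z_z^* E(z)^* c\rangle = \langle F(w)^* c', F(z)^* c\rangle + \langle E(w)^* c', E(z)^* c\rangle
\]
for all $z,w \in \mathbb D^2$ and $c, c' \in \mathbb C^N$, shows that the assignment $c \oplus Z_w^* E(w)^* c \mapsto F(w)^* c \oplus E(w)^* c$ is a well-defined isometry between two subspaces of $\mathbb C^N \oplus \mathcal M$. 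Extending it to a unitary $\sbm{\alpha & \beta \\ \gamma & \delta}$ on $\mathbb C^N\oplus\mathcal M$ and solving the two resulting relations for $E(w)^*$ (using that $I - \delta Z_w^*$ is invertible because $\|Z_w\| < 1$ on $\mathbb D^2$) gives $F(w)^* = \alpha + \beta Z_w^* (I - \delta Z_w^*)^{-1}\gamma$. Taking adjoints and applying the elementary identity $(I - Z \delta^*)^{-1} Z = Z (I - \delta^* Z)^{-1}$ yields $F(z) = A + B Z (I - D Z)^{-1} C$ with $\sbm{A & B \\ C & D} = \sbm{\alpha & \beta \\ \gamma & \delta}^{*}$ unitary, hence contractive, which is the asserted form.

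The substantive point --- and the reason the statement is credited to Knese \cite{GN} --- is the \emph{finiteness} of $d_1$ and $d_2$. The construction above produces, a priori, only auxiliary spaces $\mathcal M_j$ whose dimensions equal the ranks of the Agler kernels, and for a general Schur function these need not be finite; the transfer-function representation itself is classical. So the main obstacle is to exploit the hypothesis that $F$ is rational in order to choose $K_1, K_2$ to be rational kernels of \emph{finite} rank, whereupon $d_j = \dim \mathcal M_j = \operatorname{rank} K_j < \infty$ and the colligation acts on the finite-dimensional space $\mathbb C^N \oplus \mathbb C^{d_1} \oplus \mathbb C^{d_2} = \mathbb C^N \oplus \mathbb C^{d}$. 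I would expect to obtain this by writing $F$ over a common scalar denominator, clearing denominators in the Agler decomposition so that positivity becomes a statement about bivariate trigonometric-polynomial data, and then invoking a finite sum-of-squares representation for such data positive on the torus; securing this finite-dimensional Agler decomposition for rational $F$ is precisely the heart of the matter and the part I expect to be hardest, and it is exactly what Knese's theorem supplies.
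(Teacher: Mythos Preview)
The paper does not give its own proof of this statement; it is quoted as a tool and attributed to Knese \cite{GN} (``The finite dimensional realization formula we need has recently been proven by Knese in \cite{GN}''). Your outline is the standard lurking-isometry derivation from the Agler decomposition and is correct as far as it goes; you have also correctly isolated the genuine content, namely that for \emph{rational} $F$ one can arrange the Agler kernels $K_1,K_2$ to have finite rank, which is precisely the theorem in \cite{GN}. So there is nothing to compare against here beyond noting that you and the paper agree this is Knese's result, and that your sketch defers the hard step to exactly the same reference the paper cites.
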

	
	\begin{thm}\label{Mat-Den}
		Any holomorphic function $F: \bD^{2} \to M_{N}(\mathbb{C})$ with $\|F(z_{1},z_{2})\| \leq 1$ for all $(z_{1}, z_{2})\in\mathbb{D}^{2}$ can be approximated (uniformly on compact subsets) by $M_{N}(\mathbb{C})$-valued rational inner functions.
	\end{thm}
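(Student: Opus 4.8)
The plan is to mimic the disc argument of Theorem \ref{Mat-Den1}, replacing the one-variable realization (Theorem \ref{matrix valued 1}) with its bidisc counterpart (Theorem \ref{matrix valued 2}) and the one-variable dilation construction with a two-variable version. First I would reduce to the rational case exactly as in Theorem \ref{rational approx}: given a holomorphic $F:\bD^2\to M_N(\mathbb{C})$ with $\|F\|\le 1$, apply Proposition \ref{poly approx on polydisc} to approximate $F$ uniformly on compact subsets by polynomials $P_m$ with $\|P_m\|_{\infty,\overline{\bD^2}}<1$. (If $\|F\|\equiv 1$ a structural factorization as in \eqref{structure} together with an inductive step on $N$ handles the degenerate case; the genuinely new content is the strict-contraction rational case.) Since each such $P_m$ is rational with sup-norm below one, it suffices to approximate an arbitrary rational $F:\bD^2\to M_N(\mathbb{C})$ with $\|F\|_\infty\le 1$ by rational inner functions, and then diagonalize.

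For a fixed rational contractive $F$, Theorem \ref{matrix valued 2} supplies a contractive colligation $\sbm{A & B \\ C & D}$ on $\mathbb{C}^N\oplus\mathbb{C}^d$ with $d=d_1+d_2$ and
$$F(z_1,z_2)=A+BZ(I-DZ)^{-1}C,\qquad Z=z_1 I_{d_1}\oplus z_2 I_{d_2}.$$
The key point is that the whole approximation machinery only sees the \emph{moments} $B D^k C$ in the disc case, and here it will only see $B(DZ)^k C$; expanding the resolvent gives
$$F(z_1,z_2)=A+\sum_{k\ge 0} B(DZ)^k C \, Z,$$
a multi-indexed power series whose coefficients are built from the single operator $D$ and the fixed grading operator $Z$. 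I would therefore dilate the contraction $T=\sbm{A&B\\C&D}$ exactly as in the proof of Theorem \ref{rational approx}, producing finite-dimensional unitaries $U_m=\sbm{A & B_m\\ C_m & D_m}$ satisfying $B_m D_m^k C_m = B D^k C$ for all $m\ge k+2$. Defining
$$F_m(z_1,z_2)=A+B_m Z_m(I-D_m Z_m)^{-1}C_m,$$
with $Z_m$ the grading operator on $\mathbb{C}^{d}\oplus\cK_m$ that places $z_1,z_2$ appropriately on each copy of $\mathbb{C}^{d}=\mathbb{C}^{d_1}\oplus\mathbb{C}^{d_2}$, makes $F_m$ a rational inner function on $\bD^2$ because $U_m$ is unitary. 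The convergence estimate is then the same telescoping sum on compact subsets of $\bD^2$ as in Theorem \ref{rational approx}, using that the tail is controlled by $\max(|z_1|,|z_2|)^k$.

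The main obstacle is bookkeeping the grading operator $Z$ through the dilation. In the one-variable case the dilation intertwines cleanly because $Z=zI$ is scalar and commutes with everything, so $B_m D_m^k C_m \to B D^k C$ immediately transfers to convergence of $F_m\to F$. In two variables the coefficient of a monomial $z_1^{p}z_2^{q}$ is a sum of products $B(D P_1)^{i_1}(DP_2)^{j_1}\cdots C$ where $P_1,P_2$ are the coordinate projections defining $Z$; I must arrange the dilated colligation so that the \emph{joint} moments $B_m(D_m R_1)^{\cdots}(D_m R_2)^{\cdots}C_m$ agree with the original ones for $m$ large, where $R_1,R_2$ are the corresponding projections in $\cK_m$. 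This requires choosing the dilation block structure to respect the $\mathbb{C}^{d_1}\oplus\mathbb{C}^{d_2}$ decomposition at every copy of $\mathcal H$, so that $Z_m$ restricts correctly to $Z$ on the original space; granting this compatibility, the moment-matching identity $B_m D_m^k C_m = B D^k C$ upgrades to matching of all joint moments up to degree $k$, and the rest of the proof of Theorem \ref{rational approx} carries over verbatim. Hence $F_m\to F$ uniformly on compact subsets, completing the argument.
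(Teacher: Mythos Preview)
Your proposal is correct and matches the paper's approach: reduce to the strictly contractive polynomial case via Proposition~\ref{poly approx on polydisc}, apply the finite-dimensional bidisc realization, dilate the colligation $T$ exactly as in the proof of Theorem~\ref{rational approx}, and define $F_m$ using a block-diagonal $Z_m$ on $\cK_m$ whose first block is $Z$. The paper confirms the identity $B_m Z_m(D_m Z_m)^k C_m = BZ(DZ)^k C$ for $m\ge k+2$ irrespective of the remaining diagonal entries of $Z_m$ (they may be $z_1$, $z_2$, or even unimodular constants), so the bookkeeping obstacle you flag resolves just as you anticipate; one minor slip is that your power-series expansion should read $\sum_{k\ge 0} BZ(DZ)^k C$, with $Z$ on the left.
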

	
	\begin{proof}
	Let $F:\mathbb{D}^2\rightarrow M_N(\mathbb{C})$ be a holomorphic map with $\|F(z_1, z_2)\|\leq{1}$ for all $(z_1, z_2)\in\mathbb{D}^2$. Then in view of Lemma 6.1 of \cite{GN}, it is enough to consider the case when $\|F(z_1, z_2)\| <1$ for all $(z_1, z_2)\in\mathbb{D}^2.$ Now by Proposition \ref{poly approx on polydisc}, we can take $F$ to be a polynomial. Invoke Theorem \ref{matrix valued 2} to get positive integers $d_1 ,d_2$ and a contractive matrix
	$$ T = \begin{bmatrix}
		A & B \\ C & D
	\end{bmatrix} : \mathbb{C}^{N} \oplus \mathbb{C}^{d} \to \mathbb{C}^{N} \oplus \mathbb{C}^{d} \hspace{5mm} \text{ with } d = d_1+d_2$$
	such that with the notation $Z= z_1I_{d_1} \oplus  z_2I_{d_2}$, we have
	$$ F(z_1, z_2) = A + BZ (I -  DZ)^{-1}C.$$
	Consider the $m$-unitary dilation $\begin{psmallmatrix}
	A & B_m \\ C_m & D_m
	\end{psmallmatrix}$ of $T$ on $\mathbb{C}^N\oplus\cK_{m}$. A matrix multiplications then yields that
	$$B_mZ_m(D_mZ_m)^kC_m=BZ(DZ)^kC , \quad\text{ for } m\geq k+2 $$
	 where $Z_m=\operatorname{diag}\left(Z, *, *, \dots, *\right)$ be any diagonal operator acting on $\cK_{m}$ and the asterisk symbols stand for diagonal matrices whose diagonal entries are either $z_1$ or $z_2$ or $e^{i\theta}$ for some $\theta$. 	Consider the matrix-valued rational inner functions  $F_m$ on $\bD^2$ defined as
	$$F_m(z_1, z_2)= A+B_mZ_m(I-D_mZ_m)^{-1}C_m.$$ A similar argument as in the case of the disc will give that the sequence of rational inner functions $F_{m}$ converges to $F$ uniformly on compact subsets of $\mathbb{D}^2.$

	 \end{proof}
	
	\begin{remark}
	A comment about the case of the polydisc $\bD^{n}$ is in order for $n>2.$ Let $\varphi$ be a function from the Schur-Agler class, i.e., $\varphi$ is in $H^\infty_n$ in the notation of \cite{Agler}. Let $\{z_{1}, z_{2},\dots\}$ be a countable dense subset of $\bD^{n}$. Consider for every $m \geq 1,$ the solvable Pick-Nevanlinna interpolation data $\{(z_{1}, \varphi(z_{1})), \dots, (z_{m}, \varphi(z_{m}))\}$. It is known that this has a rational inner solution $\varphi_{m}$ from the Schur-Agler class. Montel's theorem then proves that there is a subsequence of $\{\varphi_{m}\}$ converging to $\varphi$ uniformly over compact subsets of $\bD^{n}$. This technique carries over to matrix-valued functions of Schur-Agler class. This matrix-valued version of Rudin's result is not known if $\varphi$ is in Schur class because the Schur class is bigger than the Schur-Agler class. Also, the state space method cannot be applied to prove the result even for the Schur-Agler class because a finite realization for rational inner functions on the polydisc is not known. This is a limitation for the state space method.
	\end{remark}

	\newsection{The convex hull of matrix-valued rational inner functions on the disc}
	From now on, our functions will be on $\mathbb D$. It follows from Potapov's work that every matrix-valued rational inner function is holomorphic in a neighbourhood of the closed unit disc $\overline{\bD}.$ In this section, we shall give a description of the closed convex hull of the matrix-valued rational inner functions generalizing the theorem in \cite{Fisher}.
	
	Let $F$ be an $M_{N}(\mathbb{C})$-valued function which is holomorphic in $\bD$ and continuous on $\overline{\bD}$. For $0\leq r \leq 1,$ set
	\begin{equation} \label{rscale} F_{r}(z) := F(rz) \hspace{5mm} (z\in\overline{\bD}). \end{equation}
	Clearly, $F_{r}$ is holomorphic in $\bD$ and continuous on $\overline{\bD}$ for any $0\leq r\leq 1.$ The following two lemmas follow from direct calculations.
	
	\begin{lemma}\label{mult}
		Let $\Phi,\Psi$ be two $M_{N}(\mathbb{C})$-valued rational inner functions. Suppose for some fixed $r\in [0,1],$ $\Phi_{r},\Psi_{r}$ can be written as convex combination of rational inner functions, then $(\Phi \Psi)_{r}$ can also be written as convex combination of rational inner functions.
	\end{lemma}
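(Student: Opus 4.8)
The plan is to reduce everything to the single algebraic identity $(\Phi\Psi)_r = \Phi_r\Psi_r$. This holds because the $r$-dilation in \eqref{rscale} acts only on the argument: for every $z\in\overline{\mathbb D}$ we have $(\Phi\Psi)_r(z) = (\Phi\Psi)(rz) = \Phi(rz)\Psi(rz) = \Phi_r(z)\Psi_r(z)$. Consequently the statement is equivalent to showing that the pointwise product of two functions, each already expressed as a convex combination of rational inner functions, is again such a combination.

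To carry this out I would use the hypothesis to write $\Phi_r = \sum_{i=1}^{p}\lambda_i\Theta_i$ and $\Psi_r = \sum_{j=1}^{q}\mu_j\Xi_j$, where the $\Theta_i,\Xi_j$ are $M_N(\mathbb C)$-valued rational inner functions, the $\lambda_i,\mu_j$ are nonnegative, and $\sum_i\lambda_i = \sum_j\mu_j = 1$. Expanding the product then gives $(\Phi\Psi)_r = \Phi_r\Psi_r = \sum_{i,j}\lambda_i\mu_j\,\Theta_i\Xi_j$. The scalars $\lambda_i\mu_j$ are nonnegative, and $\sum_{i,j}\lambda_i\mu_j = \big(\sum_i\lambda_i\big)\big(\sum_j\mu_j\big) = 1$, so $\{\lambda_i\mu_j\}$ is a genuine convex weighting and the only remaining task is to identify each $\Theta_i\Xi_j$ as a rational inner function.

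The verification that a product of two rational inner functions is rational inner is the one place that uses the structure of inner functions, and I expect it to be the (mild) crux. Rationality is immediate, since a product of rational matrix functions is rational; moreover, by Potapov's formula \eqref{potapov} each factor is holomorphic in a neighbourhood of $\overline{\bD}$, so their product is as well. For innerness, observe that for almost every point on the unit circle $(\Theta_i\Xi_j)(e^{i\theta}) = \Theta_i(e^{i\theta})\Xi_j(e^{i\theta})$ is a product of unitary matrices and hence unitary, so the boundary values of $\Theta_i\Xi_j$ are unitary almost everywhere. Combining these observations, $(\Phi\Psi)_r$ is exhibited as a convex combination of the rational inner functions $\Theta_i\Xi_j$, which is precisely the assertion of the lemma.
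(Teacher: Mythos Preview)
Your proof is correct and is precisely the ``direct calculation'' the paper alludes to (the paper does not spell out a proof, merely stating that the lemma follows from direct calculations). The identity $(\Phi\Psi)_r=\Phi_r\Psi_r$, the bilinear expansion of the product of two convex combinations, and the closure of rational inner functions under products are exactly the ingredients intended.
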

	
	\begin{lemma}\label{unitary}
		Let $\Phi$ be an $M_{N}(\mathbb{C})$-valued rational inner functions and $U\in M_{N}(\mathbb{C})$ be a unitary. Suppose for some fixed $r\in [0,1],$ $\Phi_{r}$ can be written as convex combination of rational inner functions, then $U \Phi_{r}$ can also be written as convex combination of rational inner functions.
	\end{lemma}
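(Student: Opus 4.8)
The plan is to exploit the linearity of left multiplication by the constant matrix $U$, together with the single observation that a constant unitary times a rational inner function is again rational inner. First I would write out the hypothesis explicitly: by assumption there exist rational inner functions $\Psi_1,\dots,\Psi_k$ and scalars $t_1,\dots,t_k\geq 0$ with $\sum_{j=1}^{k}t_j=1$ such that $\Phi_r=\sum_{j=1}^{k}t_j\,\Psi_j$.

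Next I would record the elementary closure property that does all the work. Fix any rational inner $\Psi$. Each entry of $U\Psi$ is a $\mathbb{C}$-linear combination of the entries of $\Psi$, so $U\Psi$ is again rational with all poles off $\overline{\bD}$, and in particular holomorphic on $\bD$. For almost every $\zeta\in\mathbb{T}$ the boundary value $\Psi(\zeta)$ is unitary; since $U$ is unitary and the product of two unitary matrices is unitary, $U\Psi(\zeta)$ is unitary for almost every $\zeta$. Hence $U\Psi$ is rational inner.

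Finally, applying left multiplication by $U$ to the convex decomposition and distributing yields $U\Phi_r=\sum_{j=1}^{k}t_j\,(U\Psi_j)$. Each summand $U\Psi_j$ is rational inner by the previous paragraph, while the coefficients $t_j$ are unchanged nonnegative numbers summing to one, so this exhibits $U\Phi_r$ as a convex combination of rational inner functions, as desired. There is essentially no obstacle here: the only point requiring verification is that left multiplication by a constant unitary preserves the rational inner class, which is immediate. I would also note that the hypothesis that $\Phi$ itself is rational inner is not used beyond making $\Phi_r$ meaningful; the argument rests entirely on the assumed convex decomposition of $\Phi_r$.
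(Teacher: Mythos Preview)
Your proof is correct and is precisely the ``direct calculation'' the paper alludes to (the paper does not spell out a proof, stating only that the lemma follows from direct calculations). Your observation that the hypothesis on $\Phi$ itself plays no role is also accurate.
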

	
	\begin{lemma} \label{phi_r}
		If $\Phi$ is any $M_{N}(\mathbb{C})$-valued rational inner function, then for any $0 \leq r \leq 1,$ $\Phi_{r}$ can be written as convex combination of $M_{N}(\mathbb{C})$-valued rational inner functions.
	\end{lemma}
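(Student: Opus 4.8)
The plan is to use Potapov's structure theorem \eqref{potapov} together with the two preceding lemmas to reduce the claim to a single Blaschke--Potapov \emph{factor}. By \eqref{potapov}, an arbitrary $M_N(\mathbb C)$-valued rational inner function $\Phi$ has the form
$$\Phi(z) = U \prod_{m=1}^{M}\bigl(b_{\alpha_m}(z) P_m + (I - P_m)\bigr).$$
Writing $\Phi_r(z)=\Phi(rz)$ and observing that the rescaling commutes with the product structure, I would argue by induction on the number $M$ of factors. Lemma \ref{mult} handles the inductive step for products and Lemma \ref{unitary} absorbs the leading unitary $U$, so the entire statement collapses to the base case: showing that for a \emph{single} Blaschke--Potapov factor
$$\Theta(z) = b_\alpha(z) P + (I - P),$$
the rescaled function $\Theta_r$ is a convex combination of rational inner functions.

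For the base case, first note that $\Theta_r(z) = b_\alpha(rz)\,P + (I-P)$, and that $z \mapsto b_\alpha(rz)$ is a scalar holomorphic self-map of $\mathbb D$ which is continuous on $\overline{\mathbb D}$ and bounded by one (since $b_\alpha$ maps $\overline{\mathbb D}$ into $\overline{\mathbb D}$ and $r\le 1$). The key idea is therefore to reduce to the scalar statement: the rescaled Blaschke factor $b_\alpha(rz)$, being a scalar function in the Schur class that is continuous up to the boundary, should itself be approximable by (or writable as a limit of) convex combinations of finite Blaschke products by Fisher's theorem. In fact, since $P$ and $I-P$ are fixed projections, if $b_\alpha(rz) = \sum_j t_j B_j(z)$ is a convex combination of finite Blaschke products $B_j$ with weights $t_j \ge 0$, $\sum_j t_j = 1$, then
$$\Theta_r(z) = \sum_j t_j \bigl(B_j(z) P + (I-P)\bigr),$$
and each $B_j(z) P + (I - P)$ is again a rational inner function (a product of Blaschke--Potapov factors built from $B_j$). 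This exhibits $\Theta_r$ as the desired convex combination.

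The main obstacle I anticipate is a matter of exactness versus approximation. Fisher's theorem delivers a \emph{uniform approximation} of a scalar Schur-class boundary-continuous function by convex combinations of Blaschke products, not an exact finite convex representation; so literally writing $b_\alpha(rz)$ as a finite convex combination may require the sharper fact that the \emph{rescaled} Blaschke factor $b_\alpha(rz)$, for $r<1$, lies strictly inside the disc and can be treated by an explicit finite construction, while the $r=1$ case is just $\Theta$ itself (already inner). The delicate point is thus to produce an honest finite convex combination rather than a limit --- I would handle this either by invoking an exact convex-decomposition result for the strictly contractive rescaled factor, or by carefully checking that the construction underlying Fisher's theorem yields a genuine finite convex combination in this scalar special case. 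Once the scalar factor is so decomposed, the matricial conclusion follows mechanically from the display above, and the inductive reassembly via Lemmas \ref{mult} and \ref{unitary} completes the proof.
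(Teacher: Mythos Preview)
Your approach is essentially the same as the paper's: reduce via Potapov's factorization \eqref{potapov} and Lemmas \ref{mult}, \ref{unitary} to a single Blaschke--Potapov factor, then lift a scalar convex decomposition of $b_\alpha(r\,\cdot\,)$ to the matrix level via $B_j P + (I-P)$. Regarding your ``main obstacle'': the paper simply cites \cite{Fisher} for the fact that $b_r$ is an \emph{exact} finite convex combination of scalar rational inner functions, and this is indeed correct --- Fisher's paper proves precisely this as a lemma (for any finite Blaschke product $B$ and $0\le r\le 1$, the rescaling $B_r$ is a genuine finite convex combination of finite Blaschke products), and then uses that lemma to deduce the uniform-approximation theorem you quoted; so no extra work is needed beyond invoking the right statement from \cite{Fisher}.
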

	\begin{proof}
		Note that if $\varphi$ is a scalar-valued rational inner function and $P$ is an orthogonal projection of $\mathbb{C}^{N}$ onto some subspace, then the matrix-valued function $\varphi P + (I_{\mathbb{C}^{N}} - P)$ is also rational inner. For a Blaschke factor $b$ and for any $0 \leq r \leq 1,$ it follows from \cite{Fisher} that $b_{r}$, as defined in \eqref{rscale} can be written as a convex combination of scalar-valued rational inner functions. So the $M_{N}(\mathbb{C})$-valued holomorphic function $b_{r} P + (I_{\mathbb{C}^{N}} -  P)$ can be written as a convex combination of $M_{N}(\mathbb{C})$-valued rational inner functions. The rest of the proof follows from Lemma \ref{mult} and Lemma \ref{unitary}.
	\end{proof}
	
	\begin{lemma}\label{F_r}
		Let $F$ be an $M_{N}(\mathbb{C})$-valued function which is holomorphic in $\bD$ and continuous on $\overline{\bD}$. Then $F_{r}$ converges uniformly to $F$ on $\overline{\bD}$ as $r \rightarrow 1.$
	\end{lemma}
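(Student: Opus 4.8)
The plan is to reduce everything to the uniform continuity of $F$ on the compact set $\overline{\bD}$. Since $\overline{\bD}$ is compact and $F$ is continuous on it (with values in $M_N(\mathbb{C})$ equipped with the operator norm), the Heine--Cantor theorem guarantees that $F$ is in fact \emph{uniformly} continuous on $\overline{\bD}$. This is the one structural fact that does the real work; the matrix-valued nature of $F$ is irrelevant here, because the argument only uses the metric $\|F(z) - F(w)\|$ and the completeness/compactness of the domain.

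With uniform continuity in hand, the key elementary estimate is that for every $z \in \overline{\bD}$ and every $r \in [0,1]$ one has
$$
|rz - z| = (1-r)\,|z| \le 1 - r,
$$
so that $rz$ and $z$ are uniformly close once $r$ is close to $1$, \emph{independently of} $z$. Note also that $rz \in \overline{\bD}$ whenever $z \in \overline{\bD}$, so $F(rz)$ is always evaluated within the domain of continuity.

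The steps I would carry out, in order, are: first, fix $\epsilon > 0$ and invoke uniform continuity to produce a $\delta > 0$ such that $\|F(\zeta) - F(\eta)\| < \epsilon$ whenever $\zeta,\eta \in \overline{\bD}$ satisfy $|\zeta - \eta| < \delta$. Second, choose $r_0 \in [0,1)$ with $1 - r_0 < \delta$. Third, for any $r \in (r_0,1]$ and any $z \in \overline{\bD}$, apply the estimate above to get $|rz - z| \le 1 - r < \delta$, whence $\|F_r(z) - F(z)\| = \|F(rz) - F(z)\| < \epsilon$. Since the bound is independent of $z$, taking the supremum over $z \in \overline{\bD}$ gives $\|F_r - F\|_{\infty,\overline{\bD}} \le \epsilon$ for all $r > r_0$, which is exactly uniform convergence as $r \to 1$.

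I do not expect a genuine obstacle in this lemma; it is a routine consequence of compactness. The only point meriting care is to make sure the chosen $\delta$ is used to control $1 - r$ uniformly over \emph{all} $z \in \overline{\bD}$ at once (rather than obtaining merely pointwise convergence), and this is precisely what uniform continuity, as opposed to mere continuity, supplies.
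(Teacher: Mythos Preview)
Your proof is correct and in fact more direct than the paper's. The paper first establishes the scalar case by appealing to Mergelyan's theorem, and then passes to matrices by invoking the equivalence of norms on the finite-dimensional space $M_N(\mathbb{C})$: it chooses a constant $c_N$ with $\|A\|\le c_N\max_{i,j}|a_{ij}|$, applies the scalar result to each entry $F_{ij}$, and combines. Your argument instead bypasses both of these steps by observing that $F$ is uniformly continuous on the compact set $\overline{\bD}$ (Heine--Cantor) and that $|rz-z|\le 1-r$ uniformly in $z$. This is more elementary---Mergelyan's theorem is substantial overkill here---and also more general, since it applies verbatim to functions with values in an arbitrary Banach space, not just finite-dimensional matrix algebras. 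The paper's route, on the other hand, makes explicit the entrywise reduction, which can be a convenient habit elsewhere in the matrix-valued setting, but for this particular lemma your approach is cleaner.
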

	\begin{proof}
		If $F$ is scalar-valued, then it follows from Mergelyan's theorem. Since $M_{N}(\mathbb{C})$ is finite dimensional, all norms on $M_{N}(\mathbb{C})$ are equivalent. So there exists a positive constant $c_{N}$ such that
		$$\|A\| \leq c_{N} \max\limits_{i,j} |a_{ij}|$$
		for all $A = [a_{ij}]_{N \times N},$ where $\|A\|$ is the operator norm of matrix $A.$ Let $F = [F_{ij}]_{N \times N}.$ Let $\epsilon >0$ be given. Since each $F_{ij}$ is scalar-valued, there exists $r$ close to $1$ such that
		$$| F_{ij}(z) - F_{ij}(rz) | < \frac{\epsilon}{c_{N}}$$
		for all $z\in\overline{\bD}$ and for all $i,j.$ So we get
		$$\| F(z) - F(rz)\| \leq c_{N} \max\limits_{i,j} | F_{ij}(z) - F_{ij}(rz) | < \epsilon$$
		for all $z \in \overline{\bD}.$ This concludes the proof.
	\end{proof}
	
	We are now ready with the generalization of Fisher's theorem.
	
	\begin{thm} \label{GenFish}
		Let $F$ be an $M_{N}(\mathbb{C})$-valued function which is holomorphic in $\bD$ and continuous on $\overline{\bD}$. Suppose $\|F(z)\| \leq 1$ for all $z\in\mathbb{D}.$ Then $F$ can be uniformly approximated on $\overline{\bD}$ by convex combinations of $M_{N}(\mathbb{C})$-valued rational inner functions.
	\end{thm}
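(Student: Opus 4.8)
The plan is to combine the three ingredients developed just above: the compact-convergence approximation of Theorem \ref{Mat-Den1}, the convexity statement for dilates in Lemma \ref{phi_r}, and the uniform convergence $F_{r}\to F$ of Lemma \ref{F_r}. The single idea that makes everything fit together is that the $r$-scaling $G\mapsto G_{r}$ simultaneously accomplishes two things: it converts uniform-on-compacta information about a function on $\mathbb{D}$ into uniform-on-$\overline{\mathbb{D}}$ information, since the map $z\mapsto rz$ sends $\overline{\mathbb{D}}$ into the compact set $\{|w|\leq r\}\subset\mathbb{D}$; and it turns a rational inner function into a convex combination of rational inner functions.

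First I would fix $\epsilon>0$. By Lemma \ref{F_r}, I can choose $r\in(0,1)$ close enough to $1$ that $\|F-F_{r}\|_{\infty,\overline{\mathbb{D}}}<\epsilon/2$. With this $r$ now frozen, I apply Theorem \ref{Mat-Den1} to $F$ to produce a sequence $\Phi_{m}$ of $M_{N}(\mathbb{C})$-valued rational inner functions converging to $F$ uniformly on the compact set $\{|w|\leq r\}$. Choosing $m$ large then gives $\sup_{|w|\leq r}\|F(w)-\Phi_{m}(w)\|<\epsilon/2$. Making the substitution $w=rz$ with $z$ ranging over $\overline{\mathbb{D}}$, this is exactly the statement $\|F_{r}-(\Phi_{m})_{r}\|_{\infty,\overline{\mathbb{D}}}<\epsilon/2$.

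Next I would invoke Lemma \ref{phi_r}: since $\Phi_{m}$ is rational inner, its dilate $(\Phi_{m})_{r}$ is a convex combination of $M_{N}(\mathbb{C})$-valued rational inner functions. Finally the triangle inequality
$$\|F-(\Phi_{m})_{r}\|_{\infty,\overline{\mathbb{D}}}\leq\|F-F_{r}\|_{\infty,\overline{\mathbb{D}}}+\|F_{r}-(\Phi_{m})_{r}\|_{\infty,\overline{\mathbb{D}}}<\epsilon$$
exhibits $(\Phi_{m})_{r}$ as a convex combination of rational inner functions lying within $\epsilon$ of $F$ uniformly on $\overline{\mathbb{D}}$, which is precisely the assertion of the theorem.

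There is no serious analytic obstacle here, as essentially all the content has been front-loaded into the preceding lemmas. The one point that requires genuine care, and the place where the argument would break if the pieces were assembled in the wrong order, is that $r$ must be chosen and fixed \emph{before} the approximation theorem is applied, so that the scaling $z\mapsto rz$ lands inside a fixed compact subset of $\mathbb{D}$ on which Theorem \ref{Mat-Den1} delivers genuine uniform control. One cannot let $r\to1$ and $m\to\infty$ independently, since the compact-subset convergence of $\Phi_{m}$ to $F$ is not uniform up to the boundary. Once $r$ is frozen, the rest is a two-term triangle inequality.
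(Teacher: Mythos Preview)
Your proof is correct and follows essentially the same path as the paper's: fix $r$ via Lemma~\ref{F_r}, approximate $F$ by a rational inner $\Phi$ on the compact disc of radius $r$ using Theorem~\ref{Mat-Den1}, rescale to get $\|F_{r}-\Phi_{r}\|_{\infty,\overline{\mathbb D}}<\epsilon/2$, then invoke Lemma~\ref{phi_r} and the triangle inequality. The paper's version is slightly terser (it cites Theorem~\ref{Mat-Den} and does not introduce the sequence $\Phi_{m}$ explicitly), but the argument is identical, including your correct emphasis that $r$ must be frozen before the compact-approximation step.
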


	\begin{proof}
		Let $\epsilon >0$ be given. By Lemma \ref{F_r}, there exists $r\in(0,1)$ such that
		$$\|F - F_{r}\|_{\infty, \overline{\bD}} < \frac{\epsilon}{2}.$$
		Let $\mathbb{D}_{r}$ be the closed unit ball of radius $r$ centered at $0.$ By Theorem \ref{Mat-Den}, there exists an $M_{N}(\mathbb{C})$-valued rational inner function $\Phi$ such that
		$$\| F - \Phi\|_{\infty, \mathbb{D}_{r}} < \frac{\epsilon}{2}.$$
		This implies
		$$\| F_{r} - \Phi_{r}\|_{\infty, \overline{\bD}} < \frac{\epsilon}{2}.$$
		So we get
		$$\| F - \Phi_{r}\|_{\infty, \overline{\bD}} < \epsilon.$$
		By Lemma \ref{phi_r}, it follows that $\Phi_{r}$ itself is a convex combination of $M_{N}(\mathbb{C})$-valued rational inner functions.
	\end{proof}
	
	\newsection{Relaxing analyticity}
	\subsection{Meromorphic functions}
	\begin{thm} \label{mero_intro}
		Let $F$ be an $M_{N}(\mathbb{C})$-valued meromorphic function on $\mathbb D$. Suppose the kernel $K_F(z,w) =\frac{I - F(z)F(w)^*}{1 - z\overline{w}}$ has finitely many negative squares. Let $A(F) \subset \mathbb D$ be the set on which $F$ is analytic. Then $F$ can be approximated uniformly on compact subsets of $A(F)$ by rational functions which are unitary matrix-valued on the unit circle. Moreover if $F$ is continuous on the unit circle, then $F$ can be approximated uniformly on the unit circle $\mathbb{T}$ by convex combinations of quotient of matrix-valued rational inner functions.
	\end{thm}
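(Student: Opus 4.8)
The plan is to reduce the statement to the two approximation theorems already established, Theorem~\ref{Mat-Den1} and Theorem~\ref{GenFish}, by means of the Krein--Langer factorization. Since $K_F$ has finitely many, say $\kappa$, negative squares, $F$ belongs to the generalized Schur class, and the Krein--Langer theorem furnishes a factorization
\[
F = B^{-1} S,
\]
where $S : \mathbb{D} \to M_N(\mathbb{C})$ is a Schur-class function (holomorphic with $\|S(z)\| \le 1$) and $B$ is an $M_N(\mathbb{C})$-valued Blaschke--Potapov product of degree $\kappa$, hence a rational inner function. I would use the canonical (minimal) version of this factorization, in which $B$ and $S$ have no common zeros; this guarantees that the poles of $F$ in $\mathbb{D}$ are exactly the zeros of $\det B$, so that $A(F) = \mathbb{D} \setminus \{ z : \det B(z) = 0 \}$ and $B^{-1}$ is holomorphic on all of $A(F)$. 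Because $B$ is rational inner it is holomorphic in a neighbourhood of $\overline{\mathbb{D}}$ and unitary-valued on $\mathbb{T}$; consequently $\det B$ has no zeros on $\mathbb{T}$ and $B^{-1} = B^{*}$ is unitary-valued there.

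For the first assertion I would apply Theorem~\ref{Mat-Den1} to $S$ to obtain rational inner functions $S_m$ with $S_m \to S$ uniformly on compact subsets of $\mathbb{D}$, and set $F_m := B^{-1} S_m$. Each $F_m$ is rational, and on $\mathbb{T}$ it is a product of two unitary-valued matrices, hence unitary matrix-valued on the unit circle. For convergence, fix a compact set $K \subset A(F)$; since $K$ avoids the zeros of $\det B$, the matrix $B^{-1}$ is bounded on $K$, say by $C_K$, so that
\[
\| F - F_m \|_{\infty, K} = \| B^{-1}(S - S_m) \|_{\infty, K} \le C_K \, \| S - S_m \|_{\infty, K} \to 0 .
\]
This gives uniform approximation of $F$ on compact subsets of $A(F)$ by rational functions that are unitary on $\mathbb{T}$.

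For the second assertion, suppose $F$ is continuous on $\mathbb{T}$. Since the poles of $F$ lie strictly inside $\mathbb{D}$ and $B^{-1}$ is continuous in a neighbourhood of $\mathbb{T}$, the identity $S = B F$ shows that $S$ extends continuously to $\overline{\mathbb{D}}$; being a holomorphic contraction on $\mathbb{D}$ that is continuous up to the boundary, $S$ satisfies the hypotheses of Theorem~\ref{GenFish}. That theorem yields, for any $\varepsilon > 0$, a convex combination $\sum_j t_j \Phi_j$ (with $t_j \ge 0$, $\sum_j t_j = 1$, and $\Phi_j$ rational inner) such that $\|S - \sum_j t_j \Phi_j\|_{\infty, \overline{\mathbb{D}}} < \varepsilon$. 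Multiplying on the left by $B^{-1}$ and using $\|B^{-1}(\zeta)\| = 1$ for $\zeta \in \mathbb{T}$, we obtain
\[
\Big\| F - \sum_j t_j\, B^{-1} \Phi_j \Big\|_{\infty, \mathbb{T}} = \Big\| B^{-1}\Big(S - \sum_j t_j \Phi_j\Big) \Big\|_{\infty, \mathbb{T}} < \varepsilon .
\]
Each $B^{-1}\Phi_j$ is a quotient of matrix-valued rational inner functions, so $\sum_j t_j B^{-1}\Phi_j$ is the desired convex combination of such quotients.

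The step I expect to require the most care is the first one: extracting from Krein--Langer a factorization whose Blaschke--Potapov factor $B$ has its singularities located precisely on $\mathbb{D} \setminus A(F)$. Without the coprimeness (minimality) built into the canonical factorization, $B^{-1}$ could have poles inside $A(F)$ that cancel in $F$ but not in the approximants $B^{-1}S_m$, which would break uniform convergence on compact subsets of $A(F)$. The second point to justify carefully is the passage from continuity of $F$ on $\mathbb{T}$ to continuity of $S$ on $\overline{\mathbb{D}}$, since Theorem~\ref{GenFish} presupposes a function in the disc algebra; note, however, that the boundary part of the argument needs only $\|B^{-1}\| \equiv 1$ on $\mathbb{T}$ and not the coprimeness used for the interior estimate.
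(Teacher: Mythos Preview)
Your proposal is correct and follows essentially the same route as the paper: Krein--Langer factorization $F=B^{-1}L$, then Theorem~\ref{Mat-Den1} applied to $L$ for the first assertion and Theorem~\ref{GenFish} applied to $L$ for the second, with the approximants obtained by left-multiplying by $B^{-1}$. Your treatment is in fact slightly more careful than the paper's, which does not explicitly address the coprimeness issue you flag (that $B^{-1}$ must be analytic on all of $A(F)$ for the interior estimate to work), and which also records in passing the right factorization $F=R\tilde B^{-1}$ as an alternative.
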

	
	\begin{proof}There is a remarkable factorization of operator-valued functions, due to Krein and Langer, for those functions $F$ which satisfy that $K_F$ has finitely many negative squares, see \cite{KL}, \cite{DLS}. Since our function is matrix-valued, the Krein-Langer factorization in this context says that there exists a Blaschke-Potapov product $B$ of degree $k$ and a matrix-valued holomorphic function on the disc $L$ such that
		$$F(z) = B(z)^{-1} L(z)$$
		and  $\|L(z)\| \leq 1$ for all $z \in {\mathbb{D}}.$ We apply Theorem \ref{Mat-Den} to get a sequence $\{L_m\}$ of matrix-valued rational inner functions converging to $L$  uniformly on compact subsets of $\mathbb D$. Then, the sequence $B(z)^{-1} L_m(z)$ does the job.
		
		In the case when $F$ is continuous on the unit circle, we use holomorphicity of $B$  in a neighbourhood of $\overline{\mathbb{D}}$ to conclude that the $L$ obtained above is continuous on $\mathbb{T}$ and $\|L(z)\| \leq 1$ for all $z\in\mathbb{T}.$  Now we invoke Theorem \ref{GenFish} to get  a sequence of convex combinations of matrix-valued rational inner functions $\{L_{m}\}$ such that $L_{m}$ converges to $L$ uniformly on $\overline{\mathbb{D}}.$ Define
		$$F_{m}(z) = B(z)^{-1}L_{m}(z).$$ Consider
		$$
		\| F(z) - F_{m}(z) \| = \| B(z)^{-1} (L(z) - L_{m}(z)) \|  \leq \| B(z)^{-1}\| \| L(z) - L_{m}(z) \| .
		$$
		Since $B$ is continuous on $\mathbb{T},$ $F_{m}$ converges to $F$ uniformly on $\mathbb{T}.$
		
		If we apply the right Krein-Langer factorization, then $$F(z) = R(z) \tilde{B}(z)^{-1}.$$ By a similar calculation $R_{m}(z) \tilde{B}(z)^{-1}$ will approximate $F$ uniformly on $\mathbb{T}$ where $R_{m}$ approximates $R$ as in Theorem \ref{GenFish}. That completes the proof of Theorem \ref{mero_intro}. \end{proof}
	
	\subsection{$J$-contractive functions}
	In a new direction of generalization, we consider the case of indefinite metric in the coefficient space $\mathbb C^N$, that is kernels of the form
	\[
	\frac{J-F(z)JF(w)^*}{1-z\overline{w}}
	\]
	where $J\in\mathbb C^{N\times N}$ is a signature matrix. Such a matrix is unitarily equivalent to $J_0$ defined by
	\[
	J_0=\begin{bmatrix}I_p&0\\0&-I_q\end{bmatrix},\quad p+q=N,
	\]
	with $J_0=I_N$ if $q=0$ and $J_0=-I_N$ if $p=0$.
	We are interested in the case $p>0$, $q>0$.
	In the sequel we focus on the case $J=J_0$. The formulas presented are valid for arbitrary $J$ (for which
	$p>0$ and $q>0$ in the corresponding $J_0$).

	We will use the Potapov-Ginzburg transform (see \cite{adrs,MR48:904}), which allows to reduce to the case $J=J_0=I_N$.
	Following \cite{MR1638044}, we set
	\[
	P=\frac{I_N+J_0}{2}\quad{\rm and}\quad Q=\frac{I_N-J_0}{2}.
	\]
	For $J=J_0$ at hand, we have
	\[
	P=\begin{bmatrix}I_p&0\\0&0\end{bmatrix}\quad{\rm and}\quad Q=\begin{bmatrix}0&0\\0&I_q\end{bmatrix}
	\]
	and
	\[
	P+QF(z)=\begin{bmatrix}I_p&0\\
		F_{21}(z)&F_{22}(z)\end{bmatrix}.
	\]
	Writing $F=\begin{bmatrix}F_{11}&F_{12}\\ F_{21}&F_{22}\end{bmatrix}$ we will assume that $\det F_{22}\not\equiv 0$.
	\begin{definition}
		The Potapov-Ginzburg transform of $F$ is given by
		\begin{equation*}
			\Sigma(z)=\left(PF(z)+Q\right)(P+QF(z))^{-1},
		\end{equation*}
		at those points where the inverse exists, with inverse given by
		\begin{equation*}
			F(z)=(P-\Sigma(z)Q)^{-1}(\Sigma(z)P-Q).
		\end{equation*}
	\end{definition}
	
	The following formulas hold. See \cite[p. 66]{MR1638044}, \cite{MR2002b:47144}.
	
	\begin{eqnarray}
		\Sigma(z)&=&(P-F(z) Q)^{-1}(F(z) P-Q) \nonumber \\
		F(z)&=&(Q+P\Sigma(z))(P+Q\Sigma(z))^{-1} \nonumber \\
		\label{neg-k}
		I_N-\Sigma(z)\Sigma(w)^*    &=&(P-F(z)Q)^{-1}\left(J_0-F(z)J_0F(w)^*\right)(P-F(w)Q)^{-*}
		\label{P-G-kern-1}
		\\
		I_N-\Sigma(w)^*\Sigma(z)    &=&(P+Q F(w))^{-*}\left(J_0-F(w)^*J_0F(z)\right)(P+QF(z))^{-1} \nonumber
	\end{eqnarray}
	
	A function $F$ meromorphic in $\mathbb D$ is called {\em $J_0$-contractive} if
	\begin{equation*}
		\label{J0inner}
		F(z)J_0F(z)^*\le J_0
	\end{equation*}
	at each point of analyticity of $F$ in $\mathbb D$. Such a function is in particular of bounded type in $\mathbb D$ and admits non-tangential limits almost everywhere on the unit circle. A matrix $A$ is called {\em $J_0$-unitary} if $AJ_0A^* = J_0$. A rational function $F$ will be called {\em $J_0$-inner} if the limiting values exist and are $J_0$-unitary everywhere on the unit circle except possibly at a finite number of points. In the following theorem, we mention a special case first for the sake of better exposition. \smallskip


	\begin{thm} \label{J-Intro} \leavevmode
		\begin{enumerate}
			\item Let $F$ be $J_0$-contractive, with domain of analyticity $A(F)\subset\mathbb D$. Then,
			$F$ can be approximated uniformly on compact subsets by rational $J_0$-inner functions.
			\item Let $F$ be meromorphic in the open unit disc with domain of analyticity $A(F)\subset \mathbb D$ such that the kernel
			\[
			\frac{J_0-F(z)J_0F(w)^*}{1-z\overline{w}}
			\]
			has a finite number of negative squares in $A(F)$. Then $F$ can be approximated uniformly on compact subsets of $A(F)$
			by rational $J_0$-inner functions.
		\end{enumerate}
	\end{thm}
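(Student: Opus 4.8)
The plan is to use the Potapov-Ginzburg transform to reduce both statements to the already-settled definite case $J_0=I_N$, namely Theorem \ref{Mat-Den1} for part (1) and Theorem \ref{mero_intro} for part (2). The passages $F\mapsto\Sigma$ and $\Sigma\mapsto F$ are rational (M\"obius-type) operations that intertwine the $J_0$-theory with the ordinary Schur/Krein-Langer theory, and the key point is that they convert $J_0$-contractivity (resp.\ finitely many negative squares of the $J_0$-kernel) into contractivity (resp.\ finitely many negative squares of the ordinary kernel). The engine driving everything is the identity \eqref{neg-k},
$$I_N-\Sigma(z)\Sigma(w)^*=(P-F(z)Q)^{-1}\big(J_0-F(z)J_0F(w)^*\big)(P-F(w)Q)^{-*},$$
which I would record first and then read in both directions, since it holds for any pair related by the transform.

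For part (1), setting $w=z$ in \eqref{neg-k} and using $J_0-F(z)J_0F(z)^*\ge0$ (which is exactly \eqref{J0inner}) shows that $I_N-\Sigma(z)\Sigma(z)^*\ge0$ at every point of $A(F)$ where the transform is defined; hence $\|\Sigma(z)\|\le1$ there. Since $\Sigma$ is meromorphic on $\bD$ and bounded by $1$ off a discrete set, Riemann's removable-singularity theorem promotes $\Sigma$ to a genuine Schur-class function holomorphic on all of $\bD$. Theorem \ref{Mat-Den1} then yields rational inner $\Sigma_m\to\Sigma$ uniformly on compact subsets. I would define the approximants by the inverse transform, $F_m=(P-\Sigma_mQ)^{-1}(\Sigma_mP-Q)$; these are rational, and because $\Sigma_m$ is unitary almost everywhere on $\mathbb{T}$, the $w=z$ form of \eqref{neg-k} now read with $\Sigma_m,F_m$ gives $J_0-F_m(z)J_0F_m(z)^*=0$ almost everywhere on $\mathbb{T}$, so $F_m$ is $J_0$-unitary on the circle off its finitely many poles, i.e.\ $J_0$-inner.

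For part (2), I would divide \eqref{neg-k} by $1-z\overline{w}$ to obtain
$$\frac{I_N-\Sigma(z)\Sigma(w)^*}{1-z\overline{w}}=(P-F(z)Q)^{-1}\,\frac{J_0-F(z)J_0F(w)^*}{1-z\overline{w}}\,(P-F(w)Q)^{-*},$$
which exhibits $K_\Sigma$ as a congruence of the $J_0$-kernel by the pointwise invertible factor $(P-F(z)Q)^{-1}$. Since congruence by an invertible factor preserves the number of negative squares of a kernel, $K_\Sigma$ inherits the finitely-many-negative-squares hypothesis. Theorem \ref{mero_intro} then supplies rational functions $\Sigma_m$, unitary on $\mathbb{T}$, converging to $\Sigma$ uniformly on compact subsets of $A(\Sigma)$, and the inverse transform again produces rational $J_0$-inner $F_m$.

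The main obstacle in both parts is transferring the convergence back through the inverse transform. One must control $(P-\Sigma_m(z)Q)^{-1}$ uniformly on compact subsets of $A(F)$: on such a set the limiting factor $(P-\Sigma(z)Q)^{-1}$ exists and is bounded precisely because $F$ is analytic there, so uniform convergence $\Sigma_m\to\Sigma$ forces $(P-\Sigma_mQ)$ to be invertible with uniformly bounded inverse for all large $m$, whence the poles of $F_m$ stay off the compact set and $F_m\to F$ uniformly there. Verifying the holomorphicity of $\Sigma$ in part (1) and the preservation of the negative-square count in part (2) are the other points requiring care, but both follow cleanly from \eqref{neg-k}.
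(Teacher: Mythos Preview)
Your proposal is correct and follows essentially the same route as the paper: reduce to the definite case via the Potapov--Ginzburg transform, invoke the already-proved approximation theorems (Theorem~\ref{Mat-Den1} for part~(1), Theorem~\ref{mero_intro} for part~(2)), and pull the approximants back through the inverse transform. You are in fact more explicit than the paper on two points it leaves implicit: the pointwise argument via \eqref{neg-k} with $w=z$ showing $\|\Sigma(z)\|\le1$ and removability of the singularities, and the control of $(P-\Sigma_m Q)^{-1}$ on compact subsets of $A(F)$ needed to push the convergence back. The paper instead checks that $\det(\Sigma_m)_{22}\not\equiv0$ for large $m$ (so the inverse transform is globally meromorphic) by observing $\det(\Sigma_m)_{22}\to\det\Sigma_{22}\not\equiv0$; your local invertibility argument on compacta achieves the same end and in fact implies the paper's global statement, since invertibility at a single point forces $\det(\Sigma_m)_{22}\not\equiv0$.
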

	
	\begin{proof} The Potapov-Ginzburg transform of $F$ exists by \cite[Theorem 1.1, p. 14]{Dym_CBMS}. By \eqref{P-G-kern-1}, $\Sigma$ is contractive and meromorphic in the open unit disc,
		and hence contractive and analytic there (the contractivity implies that the isolated singularities of $F$ are removable). Applying Theorem
		\ref{Mat-Den} to $\Sigma$ we can write $\Sigma=\lim_{m\rightarrow\infty}B_m$, where the $B_m$ are finite Blaschke products and where the
		convergence is uniform on compact subsets of the open unit disc. Writing $B_m=((B_m)_{ij})_{i,j=1}^2$ where $(B_m)_{22}$ is
		$\mathbb  C^{q\times q}$-valued, we have in particular
		\[
		\lim_{m\rightarrow\infty}\det (B_m)_{22}=\det \Sigma_{22}
		\]
		and in particular  $\det B_m\not\equiv 0$ for $m$ large enough. It follows that the inverse Potapov-Ginzburg transforms, say $F_m$,
		of the $B_m$ exist for such $m$. The functions $F_m$ are rational and $J_0$-inner. That completes the proof of part $(1)$.

		We now consider the case of negative squares and recall that its Potapov-Ginzburg transform, say $\Sigma$,
		is well defined (see e.g. \cite[Theorem 6.8]{ad3}). It follows from \eqref{neg-k} that
		\[
		\frac{    I_N-\Sigma(z)\Sigma(w)^*}{1-z\overline{w}}    =(P-F(z)Q)^{-1}\frac{J_0-F(z)J_0F(w)^*}{1-z\overline{w}}(P-F(w)Q)^{-*}
		\]
		and in particular the kernel $\frac{    I_N-\Sigma(z)\Sigma(w)^*}{1-z\overline{w}}$ has a finite number of negative squares in the open unit disc.
		We apply Theorem \ref{mero_intro} to $\Sigma$, and we
		get an approximation for $F$ by taking the inverse Potapov-Ginzburg transform. Thus, we have proved part $(2)$ of Theorem \ref{J-Intro}. \end{proof}
	
	\newsection{$\Gamma$-valued and $\overline{\mathbb E}$-valued functions}
	Let $\Omega$ be a bounded polynomially convex domain. The {\em distinguished boundary} $b\Omega$ is the smallest closed subset of $\overline{\Omega}$ on which every continuous function on $\overline{\Omega}$ that is analytic in $\Omega$ attains its maximum modulus.
	
	\begin{definition}
		A rational $\overline{\Omega}-$ inner function is a rational analytic map $x:\mathbb{D}\rightarrow\overline{\Omega}$ with the property that $x$ maps $\mathbb{T}$ into the distinguished boundary $b\Omega$ of $\Omega$. The degree, deg($x$), of a rational $\overline{\Omega}-$ inner function is defined to be the maximum of degree of each components.
	\end{definition}
	
	This section deals with functions which take values into the symmetrized bidisc
	$$\Gamma = \{(z+w, zw): |z| \le 1, |w| \le 1\}$$
	or into the tetrablock
	$$\overline{\mathbb{E}}=\{(a_{11}, a_{22}, \det(A)): A=\begin{bmatrix}
		a_{11} & a_{12} \\
		a_{21} & a_{22}
	\end{bmatrix} \text{ satisfies } \|A\| \le 1\}.$$
	The sets $\Gamma$ and $\overline{\mathbb{E}}$ are  non-convex and polynomially convex domains. The symmetrized bidisc was introduced by  Agler and Young in \cite{AY}  and the tetrablock  was introduced by Abouhajar, White and Young in \cite{AWY}. A great deal of function theory and operator theory has been done on these two domains. The following criteria will be useful. Let $\mathbb G$ be the open symmetrized bidisc.
	
	\begin{proposition}\cite{AY}
		Let $(s, p)\in\mathbb{C}^2$.
		The point $(s, p)\in\mathbb{G}$ (respectively $\Gamma$) if and only if
		$$|s|< (\text{respectively } \le ) 2, \text{ and } |s-\overline{s}p|< (\text{respectively } \le ) 1-|p|^2.$$
		The point $(s, p)\in b\mathbb{G}$ if and only if $|s|\leq{2}, |p|=1, \text{ and } s=\overline{s}p.$
	\end{proposition}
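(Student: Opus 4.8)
The plan is to convert membership in $\Gamma$ and $\mathbb{G}$ into a root-location problem for a quadratic. By definition $\Gamma$ (resp.\ $\mathbb{G}$) is the image of $\overline{\mathbb{D}}^2$ (resp.\ $\mathbb{D}^2$) under the symmetrization map $\pi(z,w)=(z+w,zw)$, so $(s,p)$ lies in $\Gamma$ (resp.\ $\mathbb{G}$) exactly when the monic quadratic $\lambda^2-s\lambda+p$ has both roots $z,w$ in $\overline{\mathbb{D}}$ (resp.\ $\mathbb{D}$), these roots being the only possible preimages. Writing $s=z+w$, $p=zw$, I would first record the two algebraic identities $s-\bar s p=z(1-|w|^2)+w(1-|z|^2)$ and $1-|p|^2=\tfrac12(1-|z|^2)(1+|w|^2)+\tfrac12(1-|w|^2)(1+|z|^2)$, which drive the whole argument.

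For necessity, if $z,w\in\overline{\mathbb{D}}$ then $|s|=|z+w|\le2$ at once, while the triangle inequality together with the two identities yields $1-|p|^2-|s-\bar s p|\ge\tfrac12(1-|w|^2)(1-|z|)^2+\tfrac12(1-|z|^2)(1-|w|)^2\ge0$, giving $|s-\bar s p|\le 1-|p|^2$. If instead $z,w\in\mathbb{D}$ every inequality becomes strict, which is the $\mathbb{G}$ statement.

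For sufficiency, assume $|s|\le2$ and $|s-\bar s p|\le1-|p|^2$; the second inequality forces $|p|\le1$, so $|zw|=|p|\le1$ and at most one root can lie outside $\overline{\mathbb{D}}$. Suppose for contradiction that $|w|>1\ge|z|$, and set $a=|z|$, $b=|w|$. The reverse triangle inequality applied to the first identity gives $|s-\bar s p|\ge b(1-a^2)-a(b^2-1)=(a+b)(1-ab)$; since $ab\le1$ and $b>1$ force $a<1$, whenever $ab<1$ one has $(a+b)-(1+ab)=-(1-a)(1-b)>0$, hence $|s-\bar s p|>(1+ab)(1-ab)=1-|p|^2$, a contradiction. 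In the degenerate case $ab=1$ one has $|p|=1$, so the hypothesis forces $s=\bar s p$, and a direct evaluation of the roots of $\lambda^2-s\lambda+p$ shows that $|s|\le2$ then makes both roots unimodular, again contradicting $|w|>1$. Thus both roots lie in $\overline{\mathbb{D}}$, so $(s,p)\in\Gamma$, and the strict hypotheses give $(s,p)\in\mathbb{G}$. (The open case also follows from the Schur--Cohn criterion: the reduced polynomial $\overline{a_2}\,q-a_0q^{*}$ attached to $q(\lambda)=\lambda^2-s\lambda+p$ simplifies, after dividing by $\lambda$, to $(1-|p|^2)\lambda-(s-\bar s p)$, whose single root lies in $\mathbb{D}$ precisely when $|s-\bar s p|<1-|p|^2$.)

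It remains to treat the distinguished boundary, which I expect to be the main obstacle. First I would check the elementary fact that $\{(s,p):|s|\le2,\ |p|=1,\ s=\bar s p\}$ is exactly the symmetrized torus $\pi(\mathbb{T}^2)$: on $\mathbb{T}^2$ one has $\bar s=z^{-1}+w^{-1}=s/p$ and $|p|=1$, while conversely the root computation above shows these conditions place both roots on $\mathbb{T}$. To identify $\pi(\mathbb{T}^2)$ with $b\mathbb{G}$, I would use that $\Gamma$ is polynomially convex, so the Shilov boundary exists and coincides with $b\mathbb{G}$. For any $f$ continuous on $\Gamma$ and holomorphic in $\mathbb{G}$, the composite $f\circ\pi$ is continuous on $\overline{\mathbb{D}}^2$ and holomorphic in $\mathbb{D}^2$, so by the maximum principle on the bidisc $|f\circ\pi|$ attains its maximum on the torus $\mathbb{T}^2$; hence $|f|$ is maximized on $\pi(\mathbb{T}^2)$ and $b\mathbb{G}\subseteq\pi(\mathbb{T}^2)$. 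The reverse inclusion---the minimality of $\pi(\mathbb{T}^2)$---is the delicate point: I would exhibit at each point of $\pi(\mathbb{T}^2)$ an explicit peak function assembled from the coordinate functions $s$ and $p$ (exploiting that $|p|\le1$ on $\Gamma$ with equality only on $\pi(\mathbb{T}^2)$), thereby forcing every such point into the Shilov boundary. Constructing these peak functions and verifying minimality is the step I expect to require the most care; the preceding root-location arguments are routine algebra.
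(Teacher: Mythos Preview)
The paper does not prove this proposition; it is quoted verbatim from Agler--Young \cite{AY} and used as a black box. There is therefore no proof in the paper to compare your attempt against.

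On its own merits, your argument for the characterizations of $\mathbb{G}$ and $\Gamma$ is correct. The two identities
\[
s-\bar s p = z(1-|w|^2)+w(1-|z|^2),\qquad 1-|p|^2=\tfrac12(1-|z|^2)(1+|w|^2)+\tfrac12(1-|w|^2)(1+|z|^2)
\]
do exactly what you claim, and both the necessity computation and the contradiction argument for sufficiency (including the degenerate case $|zw|=1$) are sound. This is essentially the classical root-location argument, and your aside on Schur--Cohn is the standard alternative route.

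For the distinguished boundary you have the right skeleton: polynomial convexity of $\Gamma$ gives a Shilov boundary, and pushing the bidisc maximum principle through $\pi$ yields $b\mathbb{G}\subseteq\pi(\mathbb T^2)$. The reverse inclusion is indeed the only nontrivial step, and your plan is not yet a proof. Note that the function $p$ alone has $|p|=1$ on all of $\pi(\mathbb T^2)$, so a peak function built from $p$ cannot isolate a single point; you genuinely need to bring $s$ in. One clean way: fix $(s_0,p_0)\in\pi(\mathbb T^2)$, write $p_0=e^{i\phi}$, and observe (from your own degenerate-case computation) that on the fibre $\{p=p_0\}\cap\pi(\mathbb T^2)$ the quantity $e^{-i\phi/2}s$ is real and ranges over $[-2,2]$. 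A suitable polynomial in $e^{-i\phi/2}s$ composed with $(1+\bar p_0 p)/2$ then peaks at $(s_0,p_0)$. Alternatively, one can avoid peak functions entirely by noting that the automorphism group of $\mathbb G$ acts transitively on $\pi(\mathbb T^2)$ (via the symmetrized action of M\"obius maps of $\mathbb D$), so the Shilov boundary, being invariant, must be all of $\pi(\mathbb T^2)$ once it is nonempty. Either completion would close the gap you correctly flagged.
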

	
	There are similar criteria about the tetrablock.
	
	\begin{proposition}\cite{AWY}
		Let $(x_1, x_2, x_3)\in\mathbb{C}^3$.
		The point $(x_1, x_2, x_3)\in\mathbb{E}$ (respectively $\overline{\mathbb{E}}$) if and only if $$ |x_1-\overline{x_2}x_3|+|x_2-\overline{x_1}x_3|< (\text{respectively } \leq)  1-|x_3|^2.$$
		The point $(x_1, x_2, x_3)\in b\mathbb{E}$ if and only if $ x_1=\overline{x_2}x_3|, |x_3|=1, \text{ and } |x_2|\leq{1}.$
	\end{proposition}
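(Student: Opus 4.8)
This characterization of the tetrablock is due to Abouhajar--White--Young, and the plan is to read it off directly from the definition of $\overline{\mathbb E}$ as the image of the contractive $2\times2$ matrices under $A\mapsto(a_{11},a_{22},\det A)$. First I would record the elementary reduction: a point $(x_1,x_2,x_3)$ lies in $\overline{\mathbb E}$ precisely when the partially prescribed matrix $\left[\begin{smallmatrix} x_1 & p\\ q & x_2\end{smallmatrix}\right]$ admits off-diagonal entries $p,q\in\mathbb C$ with $\|A\|\le1$ and the determinant constraint $pq=x_1x_2-x_3$. Everything then reduces to the solvability of this completion problem.

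Next I would turn the norm bound into scalar inequalities. For a $2\times2$ matrix, $\|A\|\le1$ is equivalent to $I-A^*A$ being positive semidefinite, and a Hermitian $2\times2$ matrix is positive semidefinite exactly when its trace and determinant are both nonnegative. Since $\operatorname{tr}(A^*A)=|x_1|^2+|x_2|^2+|p|^2+|q|^2$ and $\det(A^*A)=|x_3|^2$, the relation $\det(I-A^*A)=1-\operatorname{tr}(A^*A)+\det(A^*A)$ gives the two conditions
\[
|x_1|^2+|x_2|^2+|p|^2+|q|^2\le 2\qquad\text{and}\qquad |x_1|^2+|x_2|^2+|p|^2+|q|^2\le 1+|x_3|^2 .
\]
As $p,q$ enter only through $|p|^2+|q|^2$, the feasibility problem reduces to minimizing $|p|^2+|q|^2$ subject to the single modulus constraint $|p|\,|q|=|x_1x_2-x_3|$. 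By the arithmetic--geometric mean inequality the minimum equals $2|x_1x_2-x_3|$, and it is attained, the phases of $p,q$ being free and choosable so that $pq=x_1x_2-x_3$. Hence $(x_1,x_2,x_3)\in\overline{\mathbb E}$ if and only if
\[
|x_1|^2+|x_2|^2+2|x_1x_2-x_3|\le 2\qquad\text{and}\qquad |x_1|^2+|x_2|^2+2|x_1x_2-x_3|\le 1+|x_3|^2 .
\]

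The genuinely computational step, and the one I expect to be the main obstacle, is to match these with the stated inequality $|x_1-\overline{x_2}x_3|+|x_2-\overline{x_1}x_3|\le 1-|x_3|^2$. The cleanest route I see is to fix the three moduli $|x_1|,|x_2|,|x_3|$ and regard each side as a function of the single real parameter $\tau=\operatorname{Re}(x_1x_2\overline{x_3})$. A direct expansion shows that $|x_1x_2-x_3|^2$, $|x_1-\overline{x_2}x_3|^2$ and $|x_2-\overline{x_1}x_3|^2$ are all affine in $\tau$ with the common slope $-2$; consequently, after squaring the nonnegative sides, both the determinant inequality above and the stated inequality become a lower bound $\tau\ge\tau_0$, and the whole proposition comes down to checking that the two thresholds coincide. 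Two points require care. First, the stated inequality already forces $|x_3|\le 1$ (its right-hand side must be nonnegative), and once $|x_3|\le1$ the trace bound is automatically implied by the determinant bound, so only the latter has to be matched; this is why the stated form is the economical one. Second, squaring the stated inequality twice produces a factor $(1-|x_3|^2)^2$ that must cancel, and one must verify the squared relations are reversible, which holds exactly on $|x_3|\le1$. Carrying out the bookkeeping, the coefficient of the linear term in $\tau$ collapses and the two thresholds reduce to the same expression in $|x_1|,|x_2|,|x_3|$, completing the equivalence.

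Finally, the open tetrablock and the distinguished boundary follow from the same computation. Replacing $\|A\|\le1$ by $\|A\|<1$ (strict positive definiteness of $I-A^*A$) makes every inequality strict and yields the characterization of $\mathbb E$. For $b\mathbb E$, the condition $|x_3|=|\det A|=1$ together with $\|A\|\le1$ forces both singular values of $A$ to equal $1$, so $A$ is unitary; comparing $A^{-1}=A^*$ with the cofactor formula gives $\overline{x_1}=x_2\overline{x_3}$, i.e. $x_1=\overline{x_2}x_3$, while $|x_2|\le1$ holds automatically as an entry of a unitary matrix. Conversely, any triple satisfying these constraints is realized by a unitary completion and hence lies in the distinguished boundary.
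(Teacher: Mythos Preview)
The paper does not give a proof of this proposition; it is simply quoted, with citation, from Abouhajar--White--Young \cite{AWY}, so there is nothing in the paper itself to compare your argument against.

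On its own merits, your route to the $\mathbb E$ and $\overline{\mathbb E}$ characterizations is sound, and the threshold matching you describe does go through. Writing $a=|x_1|^2$, $b=|x_2|^2$, $c=|x_3|^2$, the boundary of your determinant inequality gives $8\tau_0=4ab+4c-(1+c-a-b)^2$; on the boundary $u+v=1-c$ of the stated inequality the identity $u^2-v^2=(a-b)(1-c)$ forces $u=\tfrac12(1-c+a-b)$, hence $8\tau_0=4a+4bc-(1-c+a-b)^2$, and the two expressions agree because $(1+c-a-b)^2-(1-c+a-b)^2=4(c-a)(1-b)$. One loose end you skate over: before squaring in the determinant direction you need $1+c\ge a+b$, and this has to be extracted directly from the stated inequality rather than assumed.

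The genuine gap is in the $b\mathbb E$ paragraph. What you actually prove is that the image of the $2\times 2$ unitary group under $A\mapsto(a_{11},a_{22},\det A)$ equals the set $\{x_1=\overline{x_2}x_3,\ |x_3|=1,\ |x_2|\le1\}$. You have not shown that this set is the \emph{distinguished boundary} of $\overline{\mathbb E}$ in the sense defined in the paper (the Shilov boundary for $A(\overline{\mathbb E})$). Your forward direction begins by asserting $|x_3|=1$ on $b\mathbb E$ without justification, and your converse ends with ``hence lies in the distinguished boundary,'' which presupposes that images of unitaries are Shilov-boundary points. Establishing that identification---every $f\in A(\overline{\mathbb E})$ attains its maximum modulus on this set, and no proper closed subset suffices---is precisely the content of Theorem~7.1 in \cite{AWY} and requires an independent argument (e.g.\ via peak functions), not just the matrix description.
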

	
	Algebraic and geometric aspects of rational $\Gamma-$ inner functions were studied in \cite{ALY-Adv}.
	For details about rational $\Gamma-$ inner functions and rational $\overline{\mathbb{E}}-$ functions, see \cite{ALY-JMAA-Symm, ALY-Adv, Als-Lyk-Inter,  Omar-Lykova}.

	\begin{proposition}
		Any holomorphic function $h=(s,p): \mathbb{D} \to \Gamma$ can be approximated (uniformly on compact subsets) by rational $\Gamma$-inner functions.
	\end{proposition}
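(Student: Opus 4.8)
The plan is to follow the strategy used in the polydisc Remark rather than the state space method, since $\Gamma$ is not a matrix unit ball and no finite realization for rational $\Gamma$-inner functions is available. Concretely, I would combine a finite interpolation result that produces rational $\Gamma$-inner functions with a normal families (Montel) argument. One might be tempted instead to use the Agler--Young ``magic function'' $(s,p)\mapsto\frac{2\omega p-s}{2-\omega s}$ to turn $h$ into a Schur function on $\mathbb D\times\mathbb D$ and then apply Theorem \ref{Mat-Den}; but recovering $(s,p)$ from a generic rational inner approximant destroys the M\"obius-in-$\omega$ structure and need not yield a $\Gamma$-inner pair, so that route is abandoned.

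First I would push the given map into the interior $\mathbb G$. For $r\in(0,1)$ define the polynomial retraction $\rho_r\colon\Gamma\to\mathbb G$ by $\rho_r(s,p)=(rs,r^2p)$; writing $(s,p)=\pi(z,w)=(z+w,zw)$ with $|z|,|w|\le 1$ one checks $\rho_r(s,p)=\pi(rz,rw)\in\mathbb G$, so $h^{(r)}:=\rho_r\circ h$ maps $\mathbb D$ into the open set $\mathbb G$. Since $\Gamma$ is bounded, $\rho_r\to\mathrm{id}$ uniformly on $\Gamma$, whence $h^{(r)}\to h$ uniformly on $\mathbb D$ as $r\uparrow 1$. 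Thus it suffices to approximate each $h^{(r)}$.

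Fix $r$ and write $g=h^{(r)}=(s,p)\colon\mathbb D\to\mathbb G$. Choose a countable dense set $\{z_1,z_2,\dots\}\subset\mathbb D$. For each $m$ the interpolation data $\{(z_j,g(z_j)):1\le j\le m\}$ has all its targets in the open set $\mathbb G$ and is solvable (by $g$ itself); invoking the Nevanlinna--Pick theory of the symmetrized bidisc (Agler--Young \cite{AY}, Agler--Lykova--Young \cite{ALY-Adv}) I would produce a rational $\Gamma$-inner function $g_m$ interpolating this data. Because $\Gamma$ is compact, the components satisfy $|s_m|\le 2$ and $|p_m|\le 1$, so the family $\{g_m\}$ is uniformly bounded and hence normal; by Montel's theorem a subsequence converges uniformly on compact subsets to a holomorphic map $G\colon\mathbb D\to\Gamma$. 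Since $G$ and $g$ agree on the dense set $\{z_j\}$, the identity theorem forces $G=g$, so $h^{(r)}$ is a uniform-on-compacts limit of rational $\Gamma$-inner functions. A diagonal argument over $r\uparrow 1$ then approximates $h$ itself.

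The main obstacle is the interpolation step: one must know that a solvable finite interpolation problem with data in $\mathbb G$ admits a solution that is genuinely rational $\Gamma$-inner, i.e.\ maps $\mathbb T$ into the distinguished boundary $b\mathbb G$. This is the exact analogue for the symmetrized bidisc of the classical fact that a solvable Pick problem on $\mathbb D$ has a finite Blaschke solution, and it is precisely the input the authors invoke in the polydisc Remark; here it rests on the realization and interpolation theory of \cite{AY} and \cite{ALY-Adv}. The point of first passing to $h^{(r)}$ is to keep the data strictly inside $\mathbb G$, which rules out the boundary rigidity that could otherwise obstruct the existence of an \emph{inner} interpolant; verifying that $\rho_r$ lands in $\mathbb G$ is immediate from the symmetrization description recorded above.
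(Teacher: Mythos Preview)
Your approach is correct in outline, and the paper itself, in the remark immediately following this proposition, confirms that the Pick--Nevanlinna-plus-Montel route works for $\Gamma$ (citing Costara \cite{Costara}, Theorem 4.2, for the existence of rational $\Gamma$-inner interpolants; this is the reference you want rather than \cite{AY} or \cite{ALY-Adv}). However, this is not the proof the paper gives. The paper instead lifts the problem to $2\times 2$ matrices: by Proposition 6.1 of \cite{ALY-SIAM} there is an analytic $F\colon\mathbb D\to M_2(\mathbb C)$ with $\|F\|\le 1$ and $h=(\operatorname{tr} F,\det F)$; one then applies the paper's own Theorem \ref{Mat-Den1} to approximate $F$ by matrix-valued rational inner $F_m$ and sets $h_m=(\operatorname{tr} F_m,\det F_m)$, which is rational $\Gamma$-inner because a unitary matrix has unimodular eigenvalues. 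So your opening premise that ``$\Gamma$ is not a matrix unit ball and no finite realization \ldots\ is available'' is exactly what the paper sidesteps: the $(\operatorname{tr},\det)$ lift reduces to the matrix unit ball and makes the proposition a direct corollary of the paper's dilation-theoretic main theorem. Your route stays intrinsic to $\Gamma$ but imports the $\Gamma$-interpolation theory as a black box; the paper's route keeps the argument within its own framework at the price of the external lifting lemma from \cite{ALY-SIAM}, and also dispenses with the preliminary retraction $\rho_r$ and the Montel/diagonal extraction.
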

	
	\begin{proof}
		
		Let $h =(s,p):\mathbb{D}\to\Gamma$ be a holomorphic function. Invoke Proposition 6.1 of \cite{ALY-SIAM} to obtain an analytic function $F:\mathbb{D}\rightarrow M_{2}(\mathbb{C})$ with $\|F(\lambda\|\leq 1$ for all $\lambda\in\mathbb{D}$ such that
		\begin{align*}
			h=(\operatorname{tr} F, \det F).
		\end{align*}
		By Theorem \ref{Mat-Den}, there exists a sequence of matrix-valued rational inner functions $\{F_m\}$ on $\mathbb{D}$ which approximates $F$ uniformly on compact subsets of $\mathbb{D}$. For each $m\in\mathbb{N}$, consider the holomorphic functions $h_m:\mathbb{D}\to \Gamma$ defined as
		\begin{align*}
			h_{m}:= (\operatorname{tr} F_m, \det F_m).
		\end{align*}
		It is easy to see that $h_m$ are rational functions.
		
		To prove that $h_m$ are $\Gamma$-inner functions, we only need to make the elementary observation that for a unitary matrix $A$, the eigenvalues $\lambda_1$ and $\lambda_2$ lie in $\mathbb{T}$. So, $(\operatorname{tr} A, \det A)=(\lambda_1+\lambda_2, \lambda_1\lambda_2)\in b\Gamma$. Since $F_m$ are inner, $F_{m}(\lambda)$ are unitaries a.e. on the circle. Thus, $h_m$ are $\Gamma$-inner functions.
		
		Since $F_{m}$ converges to $F$ uniformly on compact subsets, it follows that $(F_{m})_{ij}$ converges to $F_{ij}$ uniformly on compact subsets. Therefore, $h_{m}$  converges to $h$ uniformly on compact subsets of $\mathbb{D}$.
	\end{proof}
	
	We remark that the method of proof of Carath\'eodory's theorem through Pick-Nevanlinna interpolation can also be applied to approximate holomorphic functions from $\mathbb D$ into the symmetrized bidisc because of a result of Costara, see Theorem 4.2 in \cite{Costara}.
	
	\begin{proposition}
		Any holomorphic function $x= (x_{1},x_{2},x_{3}): \mathbb{D} \to \overline{\mathbb{E}}$ can be approximated (uniformly on compact subsets) by rational $\overline{\mathbb{E}}$-inner functions.
	\end{proposition}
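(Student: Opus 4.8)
The plan is to follow the proof of the preceding proposition for the symmetrized bidisc, replacing the trace--determinant realization of $\Gamma$-valued functions with the analogous matrix realization of $\overline{\mathbb E}$-valued functions. Recall that the tetrablock is exactly the image of the closed unit ball of $M_2(\mathbb C)$ under the map $A\mapsto(a_{11},a_{22},\det A)$. The crucial input is the holomorphic form of this fact: any holomorphic $x=(x_1,x_2,x_3):\mathbb D\to\overline{\mathbb E}$ can be lifted to a holomorphic $F:\mathbb D\to M_2(\mathbb C)$ with $\|F(\lambda)\|\le 1$ for all $\lambda\in\mathbb D$ and
$$x=\big(F_{11},\,F_{22},\,\det F\big).$$
Such a lift is available from the work of Abouhajar, White and Young \cite{AWY}, and it plays here the role that Proposition~6.1 of \cite{ALY-SIAM} played in the $\Gamma$ case.

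With the lift in hand, I would invoke Theorem~\ref{Mat-Den} to produce a sequence $\{F_m\}$ of $M_2(\mathbb C)$-valued rational inner functions converging to $F$ uniformly on compact subsets of $\mathbb D$, and then set
$$x_m:=\big((F_m)_{11},\,(F_m)_{22},\,\det F_m\big).$$
Each $x_m$ is manifestly a rational map. To check that it is $\overline{\mathbb E}$-inner I would argue exactly as in the $\Gamma$ case. On $\mathbb D$ we have $\|F_m(\lambda)\|\le 1$, so $x_m(\lambda)\in\overline{\mathbb E}$. On $\mathbb T$ the matrix $F_m(\lambda)$ is unitary, and for any $2\times 2$ unitary $A$ the identity $A^{-1}=A^{*}$ gives $a_{11}=\overline{a_{22}}\det A$, $|\det A|=1$, and $|a_{22}|\le 1$; by the distinguished-boundary criterion for $\overline{\mathbb E}$ stated above this is precisely $(a_{11},a_{22},\det A)\in b\mathbb E$. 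Since every matrix-valued rational inner function is analytic in a neighbourhood of $\overline{\mathbb D}$ and unitary at every point of $\mathbb T$, each $x_m$ is therefore a rational $\overline{\mathbb E}$-inner function.

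Convergence is then routine: uniform convergence of $F_m$ to $F$ on compact subsets forces entrywise convergence $(F_m)_{ij}\to F_{ij}$ and hence $\det F_m\to\det F$, so that $x_m\to x$ uniformly on compact subsets of $\mathbb D$.

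The one genuinely substantial step is the first one, the holomorphic lift $x\mapsto F$. For a fixed point, membership $(x_1,x_2,x_3)\in\overline{\mathbb E}$ amounts to the existence of a contractive $2\times 2$ matrix with prescribed diagonal $(x_1,x_2)$ and determinant $x_3$, i.e.\ to a contractive completion of the off-diagonal product $F_{12}F_{21}=x_1x_2-x_3$. The difficulty is to carry out this completion holomorphically and simultaneously in $\lambda$, in particular across the zeros of $x_1x_2-x_3$; this is exactly the point at which I would rely on the realization theory of \cite{AWY} rather than attempt a direct factorization.
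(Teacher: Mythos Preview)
Your approach is essentially identical to the paper's: lift $x$ to a contractive $M_2(\mathbb C)$-valued holomorphic $F$, approximate $F$ by matrix-valued rational inner functions $F_m$ via Theorem~\ref{Mat-Den}, push down to $x_m=((F_m)_{11},(F_m)_{22},\det F_m)$, and check that unitary boundary values land in $b\mathbb E$. The only differences are attributional: the paper cites Lemma~7 of \cite{EKZ} (rather than \cite{AWY}) for the holomorphic lift, and invokes Theorem~7.1 of \cite{AWY} for the fact that $2\times 2$ unitaries map to $b\mathbb E$, which you instead verify directly from $A^{-1}=A^*$.
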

	
	\begin{proof}
		
		Let $x= (x_{1},x_{2},x_{3})$ be as in the above Theorem. By Lemma 7 of \cite{EKZ}, there exists an analytic function $F:\mathbb{D}\rightarrow M_{2}(\mathbb{C})$ with $\|F(\lambda\|\leq 1$ for all $\lambda\in\mathbb{D}$ such that
		\begin{align*}
			x=(F_{11}, F_{22}, \det F)
		\end{align*}
		where $F=[F_{ij}]_{i,j=1}^2$. Again by Theorem \ref{Mat-Den}, there exists a sequence of matrix-valued rational inner functions $\{F_m\}$ on $\mathbb{D}$ which approximates $F$ uniformly on compact subsets of $\mathbb{D}$. For $m\in\mathbb{N}$, define the holomorphic maps $x_{m}:\mathbb{D}\to\overline{\mathbb{E}}$ by
		$$x_m=((F_m)_{11}, (F_m)_{22}, \det F_m).$$
		Now we shall prove that  this $x_m$ will do our job. It is easy to see that $x_m$ are rational functions. Now we shall prove that $x_m$ are $\overline{\mathbb{E}}$-inner functions. Since $F_m$ are inner, $F_{m}(\lambda)$ are unitaries a.e. on the circle. It follows that $x_m(\lambda)\in b\mathbb{E}$ a.e. $\lambda\in\mathbb{T}$, see Theorem 7.1 of \cite{AWY}. Thus, $x_m$ are rational $\overline{\mathbb{E}}$-inner functions.
		
		Since $F_m$ converges uniformly on compact subsets to $F$, $(F_m)_{11}, (F_m)_{22},$ and $\det F_m$ converges uniformly on compact subsets to
		$F_{11}, F_{22}$ and $\det F$ respectively. Hence $x_m$ converges uniformly on compact subsets of $\mathbb{D}$ to $x$. This completes the proof.
	\end{proof}
	
	Acknowledgement:
	D. Alpay thanks the Foster G. and Mary McGaw Professorship in Mathematical Sciences, which supported this research.
	T. Bhattacharyya is supported by a J C Bose Fellowship JCB/2021/000041 of SERB and A. Jindal is supported by the Prime Minister's Research Fellowship PM/MHRD-20-15227.03. This research is supported by the DST FIST program-2021 [TPN-700661].

\end{document}